\documentclass[10pt, a4paper, reqno]{amsart}
\usepackage[foot]{amsaddr}
\usepackage[margin=2.5cm]{geometry}
\usepackage{graphicx} 
\usepackage{amsthm, amsmath, amsfonts, amssymb, amscd, enumerate, enumitem, esint, xcolor, url, mathrsfs}
\usepackage{hyperref}

\usepackage[utf8]{inputenc}
\usepackage[T1]{fontenc}
\usepackage{mathtools}
\usepackage{enumerate}
\usepackage{textcomp}

\newtheorem{thm}{Theorem}[section]
\newtheorem{cor}[thm]{Corollary}

\newtheorem{prop}[thm]{Proposition}
\theoremstyle{definition}

\theoremstyle{remark}
\newtheorem{rem}[thm]{Remark}

\DeclareMathOperator{\LLog}{\mathbb{L}\textup{og}}
\DeclareMathOperator{\Log}{\textup{Log}}
\DeclareMathOperator{\supp}{supp}

\newcommand{\RR}{\mathbb{R}}
\newcommand{\NN}{\mathbb{N}}

\newcommand{\RH}{\textup{RH}}
\newcommand{\Lip}{\textup{Lip}}

\numberwithin{equation}{section}
\usepackage[defaultlines=2, all]{nowidow}
\setnoclub[2]
\allowdisplaybreaks

\setlist[enumerate,1]{label=\normalfont{(\alph*)}}

\begin{document}

	\title[Logarithmic Schr\"odinger operators]
	{Logarithmic Schr\"odinger operators}

	\author[J. J. Betancor]{Jorge J. Betancor$^1$}
        \address{$^1$Departamento de An\'alisis Matem\'atico, Universidad de La Laguna,\newline
	Campus de Anchieta, Avda. Astrof\'isico S\'anchez, s/n,\newline
	38721 La Laguna (Sta. Cruz de Tenerife), Spain}
	\email[J. J. Betancor]{jbetanco@ull.es}
	
	\author[E. Dalmasso]{Estefan\'ia Dalmasso$^{2}$}
	\address{$^2$Instituto de Matem\'atica Aplicada del Litoral, UNL, CONICET,     FIQ.\newline 
    Colectora Ruta Nac. Nº 168, Paraje El Pozo,\newline S3007ABA, Santa Fe, Argentina}
	\email[E. Dalmasso]{edalmasso@santafe-conicet.gov.ar}

    \author[J. C. Fariña]{Juan Carlos Fariña$^1$}  
        \email[J. C. Fariña]{jcfarina@ull.edu.es}
	
	\author[P. Quijano]{Pablo Quijano$^{2}$}
	\email[P. Quijano]{pquijano@santafe-conicet.gov.ar}
	
	\date{\today}
	\subjclass{35R11, 35J10, 42B37, 47D06}
	
	\keywords{Schr\"odinger, logarthmic operator, nonolocal operator.}

\begin{abstract}
    In this paper we consider the Schr\"odinger operator $\mathcal L_V= -\Delta + V$ in $\mathbb R^d$ with a non negative potential $V$, and $V\not\equiv 0$. We define the logarithmic Schr\"odinger operator $\log \mathcal L_V$ proving its main properties. We obtain a pointwise representation of $\log \mathcal L_V$ when $V$ satisfies a reverse H\"older inequality of exponent $q> \frac{d}{2}$ by using the semigroup of operators $\{T_t^V\}_{t>0}$ generated by $\mathcal L_V$. We consider the Lipschitz function space adapted to the Schr\"odinger setting to solve the initial value problem
    \[\begin{dcases*}
    \frac{\partial u}{\partial t}=-(\log\mathcal L_V)u, & $\text{in } \mathbb R^n\times (0,\infty)$ \\
    u(x,0)=f(x), & $x\in \mathbb R^d$
    \end{dcases*}\]
      in terms of the fractional integral associated with $\mathcal L_V$.
\end{abstract}

\maketitle

\section{Introduction}
Assume that $V$ is a nonnegative potential in $L^1_{\textup{loc}}(\mathbb{R}^d)$. We consider the sesquilinear form $T$ defined by 
\begin{equation*}
    T(f,g) = \int_{\mathbb{R}^d} \nabla f \cdot \overline{\nabla g} \, dx + \int_{\mathbb{R}^d} V f \,\overline{g}\, dx,
\end{equation*}
where $f, g \in D_T = \{h \in L^2(\mathbb{R}^d): \nabla h \in L^2(\mathbb{R}^d) \text{ and } \sqrt{V} h \in L^2(\mathbb{R}^d)\}$. Since $D_T$ is dense in $L^2(\mathbb{R}^d)$ and the quadratic form defined by $T$ is closed and nonnegative, there exists a self-adjoint operator \linebreak $\mathcal{L}_V: D_T =  D(\mathcal{L}_V)  \rightarrow L^2(\mathbb{R}^d)$ such that $\langle \mathcal{L}_V f, g\rangle = T(f,g)$ for all $f, g \in D(\mathcal{L}_V)$ \cite[Theorem VIII.15]{ReSi}. Moreover, for every $f \in C^\infty_c(\mathbb{R}^d)$, the space of $C^\infty$ functions with compact support in $\mathbb{R}^d$, we have $\mathcal{L}_V f = -\Delta f + V f$.

Our objective is to define and study the logarithmic operator $\log \mathcal{L}_V$. This operator is defined using functional calculus. We prove that, alternatively, $\log \mathcal{L}_V$ can be defined as the first variation of the fractional operator $\mathcal{L}_V^{s}$ at $s=0$. We give a pointwise representation of the operator $\log \mathcal{L}_V$. We also solve the initial value problem $\frac{\partial u}{\partial t}=-(\log\mathcal L_V)u$ in $\mathbb R^n\times (0,\infty)$, and $u(x,0)=f(x)$, $x\in \mathbb R^d$, by using fractional integrals and Lipschitz spaces in the Schr\"odinger setting.

In the last decade, nonlocal operators have been studied by many authors. These operators appear in physics, probability theory, geometry, and applied mathematics (see, for instance,~\cite{CS,  FJW, JX, LL, LPW, Lu, RS, SV}). One of the most important examples of nonlocal operators is the fractional Laplacian $(-\Delta)^s$, $s>0$. The logarithmic Laplacian $\log (-\Delta)$ is a nonlocal operator that has received a lot of attention in recent years (see~\cite{Ch-orderm, ChHW, ChV2, ChV1, ChW, Ch-manifolds, ChX, Feu1, Feu2, HLS, HLW, LW, Lee}). The logarithmic Laplacian $\log (-\Delta)$ is closely connected with the fractional Laplacian $(-\Delta)^s$ by $\log (-\Delta) = \partial_s (-\Delta)^s |_{s=0}$. Recently, the study of $\log(-\Delta)$ has been extended to general Riemannian manifolds (\cite{Ch-manifolds}) and general graphs (\cite{ChX}) 

We now define the logarithmic Schr\"odinger operator $\log \mathcal{L}_V$, which is the object of our study. The spectrum $\sigma(\mathcal{L}_V)$ is contained in $[0,+\infty)$. Moreover, $0$ is not an eigenvalue of $\mathcal{L}_V$. Indeed, suppose that $f \in D(\mathcal{L}_V)$ is such that $\mathcal{L}_V f = 0$. Then,
\begin{equation*}
    \langle \mathcal{L}_V f, f \rangle = \int_{\mathbb{R}^d} |\nabla f|^2 \, dx + \int_{\mathbb{R}^d} V |f|^2 \, dx = 0.
\end{equation*}
Hence, $\nabla f = 0$ a.e. in $\mathbb{R}^d$, where the gradient is understood distributionally, and $f(x) = 0$ a.e. \linebreak $x \in \{y \in \mathbb{R}^d : V(y) > 0\}$. Since $f \in L^2(\mathbb{R}^d)$, then $f(x) = 0$ a.e. $x \in \mathbb{R}^d$.

There exists a spectral measure $E_V$ supported on $\sigma(\mathcal{L}_V)$ such that 
\begin{equation*}
    \mathcal{L}_V f = \int_{[0,+\infty)} \lambda \, dE_V(\lambda) f = \int_0^{\infty} \lambda \, dE_V(\lambda) f, \quad f \in D(\mathcal{L}_V), 
\end{equation*}
and 
\begin{equation*}
    D(\mathcal{L}_V) = \left\{ 
    f \in L^2(\mathbb{R}^d) : \int_{0}^\infty \lambda^2 \, d\mu_{f,f}^V(\lambda) < \infty \right\}.
\end{equation*}
Here, for every $f, g \in L^2(\mathbb{R}^d)$, $\mu_{f,g}^V$ is a complex Borel measure given by $\mu_{f,g}^V(U) = \langle E_V(U) f, g \rangle$, for every Borel measurable set $U \subseteq \mathbb{R}$. Note that $E_V(\{0\}) = 0$ because $0$ is not an eigenvalue of $\mathcal{L}_V$.

If $\phi$ is a Borel measurable function on $\mathbb{R}$, we define
\begin{equation*}
    \phi(\mathcal{L}_V) f = \int_0^\infty \phi(\lambda) \, dE_V(\lambda) f, \quad \text{for every } f \in D(\phi(\mathcal{L}_V)),
\end{equation*}
where 
\begin{equation*}
    D(\phi(\mathcal{L}_V)) = \left\{ 
    f \in L^2(\mathbb{R}^d) : \int_{0}^\infty |\phi(\lambda)|^2 \, d\mu_{f,f}^V(\lambda) < \infty \right\}.
\end{equation*}

The logarithmic operator $\log \mathcal{L}_V$ is therefore defined by
\[
(\log \mathcal{L}_V) f = \int_0^\infty \log(\lambda) \, dE_V(\lambda) f, \quad f \in D(\log \mathcal{L}_V),
\]
where
\begin{equation*}
    D(\log \mathcal{L}_V) = \left\{ 
    f \in L^2(\mathbb{R}^d) : \int_{0}^\infty |\log(\lambda)|^2 \, d\mu_{f,f}^V(\lambda) < \infty \right\}.
\end{equation*}

When $V(x) = |x|^2$, $x\in\mathbb{R}^d$, the operator $\mathcal{L}_V$ reduces to the harmonic oscillator operator $H$. In this case, the spectrum $\sigma(H) = \{2k+d\}_{k\in \mathbb{N}_0}$ is discrete. Here $\mathbb{N}_0 = \mathbb{N} \cup \{0\}$. Moreover, for every $k\in \mathbb{N}_0$ and $\alpha = (\alpha_1,\dots,\alpha_d)\in \mathbb{N}_0^d$ such that $\alpha_1+\dots+\alpha_d = k$, 
\begin{equation*}
    H h_\alpha = (2k+d) h_\alpha,
\end{equation*}
where $\displaystyle h_\alpha(x) = \prod_{j=1}^d h_{\alpha_j}(x_j)$, $x=(x_1,\dots,x_d)\in\mathbb{R}^d$, being
\begin{equation*}
    h_\ell (u) = (-1)^\ell (\sqrt{\pi} 2^\ell \ell !)^{-1/2} \frac{ \partial^\ell}{\partial u^\ell} \left(e^{-u^2}\right) e^{u^2/2}, \quad u\in\mathbb{R} \text{ and } \ell\in\mathbb{N}_0.
\end{equation*}
If $\alpha = (\alpha_1,\dots,\alpha_d)\in \mathbb{N}_0^d$, we denote $s(\alpha) = \alpha_1 +\dots + \alpha_d$. We have that
\begin{equation*}
    (\log H) f = \sum_{k\in\mathbb{N}_0} \log(2k+d) \sum_{\substack{\alpha\in \mathbb{N}_0^d \\ s(\alpha)=k}} c_\alpha(f) h_\alpha, \quad \text{for every } f \in D(\log H).
\end{equation*}
Here, 
\begin{equation*}
    D(\log H) =  \left\{ f \in L^2(\mathbb{R}^d): 
    \sum_{k\in\mathbb{N}_0} \sum_{\substack{\alpha\in \mathbb{N}_0^d \\ s(\alpha)=k}} 
    |\log(2k+d)|^2 |c_\alpha(f)|^2 < \infty
    \right\}
\end{equation*}
and, for $\alpha \in \mathbb{N}_0^d$,
\begin{equation*}
    c_\alpha(f) = \int_{\mathbb{R}^d} h_\alpha(x) f(x) \, dx, \quad f \in L^2(\mathbb{R}^d).
\end{equation*}

For $s \in (0,1)$ we define the $s$-power of $\mathcal{L}_V$ by
\begin{equation*}
    \mathcal{L}_V^s f = \int_0^\infty \lambda^s \, dE_V(\lambda) f, \quad f \in D(\mathcal{L}_V^s),
\end{equation*}
where
\begin{equation*}
    D(\mathcal{L}_V^s) = \left\{
    f \in L^2(\mathbb{R}^d) : \int_0^\infty \lambda^{2s} \, d\mu_{f,f}^V(\lambda) < \infty
    \right\}.    
\end{equation*}
Note that $D(\mathcal{L}_V^{s_2}) \subseteq D(\mathcal{L}_V^{s_1})$ 
provided that $0 < s_1 \leq s_2 < 1$. If $f \in D(\mathcal{L}_V^\beta)$ for some $\beta \in (0,1)$, we have that
\begin{equation*}
    \lim_{s \rightarrow 0^+} \mathcal{L}_V^s f = f \quad \text{in } L^2(\mathbb{R}^d). 
\end{equation*}

   We define the operator $\LLog \mathcal{L}_V$ as follows:
\begin{equation*}
    (\LLog \mathcal{L}_V) f = \frac{d}{ds}(\mathcal{L}_V^s f)\big|_{s=0} = \lim_{s\rightarrow 0^+} \frac{\mathcal{L}_V^s f - f}{s},
\end{equation*}
provided that 
\begin{equation*}
    f \in D(\LLog \mathcal{L}_V) = \left\{ f \in L^2(\mathbb{R}^d) : f \in D(\mathcal{L}_V^\beta) \text{ for some } \beta \in (0,1) \text{ and } \lim_{s\rightarrow 0^+} \frac{\mathcal{L}_V^s f - f}{s} \text{ exists in } L^2(\mathbb{R}^d) \right\}.
\end{equation*}
We define, for every $t > 0$, the operator $T_t^V$ by 
\begin{equation*}
    T_t^V f = \int_0^\infty e^{-\lambda t} \, dE_V(\lambda) f, \quad f \in L^2(\mathbb{R}^d).
\end{equation*}

The family $\{T_t^V\}_{t>0}$ is a $C_0$-semigroup of operators on $L^2(\mathbb{R}^d)$. The operator $\mathcal{L}_V$ is the infinitesimal generator of $\{T_t^V\}_{t>0}$. We can write, for every $t > 0$, $T_t^V = \phi_t(\mathcal{L}_V)$, where $\phi_t(\lambda) = e^{-\lambda t}$, $\lambda \in (0,\infty)$.

The Frullani integral representation (see~\cite[p. 363]{GR}) allows us to write
\begin{equation*}
    \log(\lambda) = \int_0^\infty \frac{e^{-t} - e^{-\lambda t}}{t} \, dt, \quad \lambda > 0. 
\end{equation*}
This formula motivates the following definition:
\begin{equation*}
    (\Log \mathcal{L}_V) f = \lim_{m \rightarrow \infty} \int_{1/m}^m \frac{e^{-t}I - T_t^V}{t} f \, dt, \quad f \in D(\Log \mathcal{L}_V),
\end{equation*}
where the integrals are understood in the $L^2(\mathbb{R}^d)$-Bochner sense and the limit is in $L^2(\mathbb{R}^d)$. Here
$D(\Log \mathcal{L}_V)$ is the set of all functions $f \in L^2(\mathbb{R}^d)$ such that $\displaystyle \frac{e^{-t} I - T_t^V}{t} f$ is $L^2(\mathbb{R}^d)$-Bochner integrable on $(1/m,m)$ for every $m \in \mathbb{N}$ and the limit
\begin{equation*}
    \lim_{m \rightarrow \infty} \int_{1/m}^m \frac{e^{-t} I - T_t^V}{t} f \, dt
\end{equation*}
exists in $L^2(\mathbb{R}^d)$. We recall that $\displaystyle \frac{e^{-t} I - T_t^V}{t} f$ is $L^2(\mathbb{R}^d)$-Bochner integrable on $(1/m,m)$ for $m \in \mathbb{N}$ when it is $L^2(\mathbb{R}^d)$-strongly measurable on $((1/m,m), dt)$ and 
\begin{equation*}
    \int_{1/m}^m \left\| \frac{e^{-t} I - T_t^V}{t} f \right\|_{L^2(\mathbb{R}^d)} \, dt < \infty.
\end{equation*}

The following theorem establishes the equivalence between the different definitions of the logarithmic operator.

\begin{thm}\label{thm: 1.1}
    Let $f \in L^2(\mathbb{R}^d)$. We have that
    \begin{enumerate}
        \item\label{item: teo 1.1 - a} $f \in D(\mathcal{L}_V^{s_0}) \cap D(\log \mathcal{L}_V)$ for some $s_0 \in (0,1)$, if and only if $f \in D(\LLog \mathcal{L}_V)$ and $(\log \mathcal{L}_V) f = (\LLog \mathcal{L}_V) f$.
        \item\label{item: teo 1.1 - b} If $f \in D(\log \mathcal{L}_V)$, then $f \in D(\Log \mathcal{L}_V)$ and $(\log \mathcal{L}_V) f = (\Log \mathcal{L}_V) f$.
    \end{enumerate}
\end{thm}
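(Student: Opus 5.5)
The plan is to work entirely on the spectral side. By the functional calculus, for any Borel $\phi$ and $f\in D(\phi(\mathcal L_V))$ we have $\|\phi(\mathcal L_V)f\|_{L^2}^2=\int_0^\infty|\phi(\lambda)|^2\,d\mu^V_{f,f}(\lambda)$. Every convergence statement in Theorem~\ref{thm: 1.1} therefore reduces to a statement about scalar functions of $\lambda$, integrated against the finite measure $\mu^V_{f,f}$ on $(0,\infty)$. We use throughout that $E_V(\{0\})=0$, so $\log\lambda$ is finite $\mu^V_{f,f}$-a.e.

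For part \ref{item: teo 1.1 - a}, first assume $f\in D(\mathcal L_V^{s_0})\cap D(\log\mathcal L_V)$. For $0<s<s_0/2$ we write
\[
\Bigl\|\frac{\mathcal L_V^sf-f}{s}-(\log\mathcal L_V)f\Bigr\|_{L^2}^2=\int_0^\infty\Bigl|\frac{\lambda^s-1}{s}-\log\lambda\Bigr|^2d\mu^V_{f,f}(\lambda).
\]
The integrand tends to $0$ pointwise. For domination, the mean value theorem gives $|(\lambda^s-1)/s|\le|\log\lambda|\max(1,\lambda^{s})$. For $\lambda\le1$ this is at most $|\log\lambda|$. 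For $\lambda>1$ it is at most $\lambda^{s_0/2}\log\lambda\le C\lambda^{s_0}$. Hence the integrand is bounded by $C(|\log\lambda|^2+\lambda^{2s_0}+1)$, which is integrable by hypothesis, and dominated convergence finishes this direction.

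Conversely, assume $f\in D(\LLog\mathcal L_V)$, with $f\in D(\mathcal L_V^\beta)$ and $g$ the $L^2$ limit. We must show $f\in D(\log\mathcal L_V)$, and $s_0=\beta$ then works. Take a sequence $s_n\to0^+$. Along it, $\int|(\lambda^{s_n}-1)/s_n|^2\,d\mu^V_{f,f}$ stays bounded because the $L^2$ norms converge. Fatou's lemma then gives $\int|\log\lambda|^2\,d\mu^V_{f,f}<\infty$, so $f\in D(\log\mathcal L_V)$. The forward direction now applies and yields $(\LLog\mathcal L_V)f=(\log\mathcal L_V)f$. This identifies $g$, since limits are unique.

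For part \ref{item: teo 1.1 - b}, let $f\in D(\log\mathcal L_V)$. The map $t\mapsto (e^{-t}I-T^V_t)f/t$ is continuous on $(0,\infty)$ into $L^2$, by strong continuity of the semigroup, and is bounded on $(1/m,m)$. It is therefore Bochner integrable there. Pairing with arbitrary $g\in L^2$ and using Fubini on $(1/m,m)\times(0,\infty)$ with the finite complex measure $\mu^V_{f,g}$ gives
\[
\int_{1/m}^m\frac{e^{-t}I-T^V_t}{t}f\,dt=\phi_m(\mathcal L_V)f,\qquad \phi_m(\lambda)=\int_{1/m}^m\frac{e^{-t}-e^{-\lambda t}}{t}\,dt .
\]
Fubini applies because the integrand is bounded by $m\cdot 2$. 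It then remains to show $\int|\phi_m(\lambda)-\log\lambda|^2\,d\mu^V_{f,f}\to0$. Frullani's formula gives $\phi_m\to\log\lambda$ pointwise, so again we need an integrable dominant.

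The key estimate, and the main technical point of the proof, is the uniform bound $|\phi_m(\lambda)|\le|\log\lambda|$. The function $(e^{-t}-e^{-\lambda t})/t$ has constant sign in $t$: it is nonnegative when $\lambda\ge1$ and nonpositive when $\lambda\le1$. So truncating the $t$-integral only decreases its absolute value, giving $|\phi_m(\lambda)|\le|\log\lambda|$ for all $m$. Dominated convergence with dominant $4|\log\lambda|^2\in L^1(\mu^V_{f,f})$ then yields the convergence. Hence $f\in D(\Log\mathcal L_V)$ and $(\Log\mathcal L_V)f=(\log\mathcal L_V)f$.
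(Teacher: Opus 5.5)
Your proof is correct and follows the paper's argument: you reduce everything to scalar integrals against $\mu^V_{f,f}$, use Fubini to identify $\int_{1/m}^m \frac{e^{-t}I-T_t^V}{t}f\,dt$ with $\phi_m(\mathcal L_V)f$, and conclude by dominated convergence, with your Fatou step in the converse of (a) doing the job of the paper's Cauchy-sequence argument borrowed from Proposition~\ref{propo: 2.2}. The one noticeable difference is the dominant in (b). Your constant-sign observation gives $|\phi_m(\lambda)|\le|\log\lambda|$ at once, whereas the paper splits off the tails $\int_0^{1/m}$ and $\int_m^\infty$ and bounds them separately for $\lambda<1$ and $\lambda>1$ via estimates from \cite{Ch-manifolds}, so your version is shorter and self-contained; likewise, restricting to $s<s_0/2$ in (a) makes the dominating constant genuinely uniform in $s$, a point the paper's bound $|\log\lambda|\lambda^s\le C\lambda^{s_0}$ for all $0<s<s_0$ glosses over.
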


Our next objective is to obtain a pointwise representation of $\log \mathcal{L}_V$ on $C_c^\infty(\mathbb{R}^d)$. Note that, as will be established in Proposition~\ref{prop: 2.1}, we have $C_c^\infty(\mathbb{R}^d) \subseteq D(\log \mathcal{L}_V)$. 

A key difference from the classical case arises from the semigroup properties: unlike the classical heat semigroup $\{W_t\}_{t>0}$ generated by $-\Delta$, which satisfies $W_t 1 = 1$ for all $t > 0$ (the Markovian property), the semigroup $\{T_t^V\}_{t>0}$ is not Markovian, that is, $T_t^V 1 \neq 1$ for $t > 0$. This fundamental difference produces significant distinctions between the pointwise representation for $\log(-\Delta)$ established in~\cite[Theorem 1.1]{ChW} and the one we shall derive for $\log(\mathcal{L}_V)$.

In order to establish our pointwise representation of $\log \mathcal{L}_V$, we consider a special class of potentials satisfying reverse H\"older inequalities. For $1 < q < \infty$, we say that a locally integrable function $w$ is in the $q$-reverse H\"older class $\RH_q$ when there exists $C > 0$ such that
\begin{equation*}
    \left(\frac{1}{|B|} \int_B w^q \, dx\right)^{1/q} \leq \frac{C}{|B|} \int_B w \, dx,
\end{equation*}
for every ball $B$ in $\mathbb{R}^d$. Functions in $\RH_q$ classes were first studied by Gehring (\cite{G}) and Coifman--Fefferman (\cite{CF}). The condition $V \in \RH_q$ for some $1 < q < \infty$ naturally appears in the study of harmonic analysis operators in the Schr\"odinger setting, such as Riesz transforms, Littlewood--Paley functions, spectral multipliers, commutators, and others (see~\cite{BFHR, BHQ, Dz1, Dz2, DGMTZ, GHY, Shen95, ZT},
and the references therein). 

Suppose that $d \geq 3$ and $V \in \RH_q$ with $q > d/2$. As in~\cite[Eq.~(0.14)]{Shen95}, we define the function $\rho: \mathbb{R}^d \rightarrow (0,\infty)$ by
\begin{equation*}
    \rho(x) = \sup\left\{ r > 0 : \frac{1}{r^{d-2}} \int_{B(x,r)} V(y) \, dy \leq 1 \right\}.
\end{equation*}

The function $\rho$ was introduced in~\cite{Shen94} for potentials $V$ satisfying, for some $C > 0$, 
\begin{equation*}
    \max_{x \in B} V(x) \leq \frac{C}{|B|} \int_B V(x) \, dx,
\end{equation*}
for every ball $B$ in $\mathbb{R}^d$, in the context of studying the Neumann problem for $\mathcal{L}_V$. This auxiliary function $\rho$ plays a fundamental role in harmonic analysis in the Schr\"odinger setting.

It is also proved in~\cite{Shen94} that this function $\rho$ satisfies the following estimates: \begin{equation}\label{eq: rox_vs_roy} c_\rho^{-1} \, \rho(x)\left(1+\frac{|x-y|}{\rho(x)}\right)^{-N_0} \ \leq \ \rho(y) \ \leq \ c_\rho\,\rho(x)\left(1+\frac{|x-y|}{\rho(x)}\right)^{\frac{N_0}{N_0+1}}, \end{equation} for some positive constants $c_\rho$ and $N_0$ independent of $x$ and $y$.
As a consequence of \eqref{eq: rox_vs_roy}, we have in particular that $\rho(x)\sim \rho(y)$ whenever $|x-y|\leq \rho(x)$. That is, there exists a constant $C>0$, independent of $x$ and $y$, such that
$
C^{-1}\,\rho(x) \leq \rho(y) \leq C\,\rho(x)
$ whenever $|x-y|\leq \rho(x)$.

For every $t > 0$ there exists a measurable function $T_t^V : \mathbb{R}^d \times \mathbb{R}^d \rightarrow (0,\infty)$ such that
\begin{equation}\label{eq: 1.1}
    T_t^V(f)(x) = \int_{\mathbb{R}^d} T_t^V(x,y) f(y) \, dy,
\end{equation}
for every $f \in L^2(\mathbb{R}^d)$. Moreover,~\eqref{eq: 1.1} allows us to define a semigroup of contractions in $L^p(\mathbb{R}^d)$, ${1 \leq  p < \infty}$. Since $V$ is nonnegative, the Feynman--Kac formula implies that
\begin{equation}\label{eq: 1.2}
    0 \leq T_t^V(x,y) \leq T_t(x-y), \quad x, y \in \mathbb{R}^d \text{ and } t > 0,
\end{equation}
where $T_t(z) = (4\pi t)^{-d/2} e^{-|z|^2/(4t)}$, $z \in \mathbb{R}^d$ and $t > 0$, is the classical heat kernel.

Estimate~\eqref{eq: 1.2} can be improved using that $V \in \RH_q$ with $q > d/2$. As stated in \cite[Proposition~2]{DGMTZ} (see also \cite{DzZ2, Kur}), for every $N \in \mathbb{N}$ there exists $C_N> 0$ such that 
\begin{equation}\label{eq: 1.3}
    T_t^V(x,y) \leq \frac{C_N}{t^{d/2}} e^{-\frac{|x-y|^2}{5t}} \left( 1 + \frac{\sqrt{t}}{\rho(x)} + \frac{\sqrt{t}}{\rho(y)}\right)^{-N}, 
\end{equation}
for all $x, y \in \mathbb{R}^d$ and $t > 0$. Also,  \cite[Proposition~4.11]{DzZ2} provides a regularity estimate: there exist $\eta, c > 0$ such that for every $N \in \mathbb{N}$ there exists a constant $C_N > 0$ for which
\begin{equation}\label{eq: 1.4}
    |T_t^V(x+h,y) - T_t^V(x,y)| \leq \frac{C_N}{t^{d/2}} \left(\frac{|h|}{\sqrt{t}} \right)^{\eta} e^{-c\frac{|x-y|^2}{t}} \left( 1 + \frac{\sqrt{t}}{\rho(x)} + \frac{\sqrt{t}}{\rho(y)}\right)^{-N}, 
\end{equation}
whenever $|h| < \sqrt{t}$, $x, y \in \mathbb{R}^d$ and $t > 0$.

The operator $\mathcal{L}_V$ can be seen as a nice perturbation of the Euclidean Laplacian $-\Delta$ in the following sense: there exists a nonnegative Schwartz class function $\omega$ on $\mathbb{R}^d$ and $\delta > 0$ such that 
\begin{equation}\label{eq: 1.5}
    |T_t^V(x,y) - T_t(x-y)| \leq \begin{cases}
\left(\dfrac{\sqrt{t}}{\rho(x)}\right)^\delta \omega_t(x-y), & \text{if } \sqrt{t} \leq \rho(x),  \\
\omega_t(x-y), & \text{if } \sqrt{t} > \rho(x),
\end{cases}
\end{equation}
for $x, y \in \mathbb{R}^d$, where $\omega_t(z) = t^{-d/2} \omega(z/\sqrt{t})$, $z \in \mathbb{R}^d$ and $t > 0$ (see \cite[Proposition~2.16]{DzZ3}).
We now establish our pointwise representation for $\log(\mathcal{L}_V)$. This result extends, in particular, \cite[Theorem~1.1]{Feu1} where $V \equiv 1$ is considered. 

\begin{thm}\label{thm: 1.2} 
    Let $d \geq 3$ and $q > d/2$. Suppose that $V \in \RH_q$ and $f \in C^\infty_c(\mathbb{R}^d)$. 
    \begin{enumerate}
        \item \label{item: teo 1.2 - a} We have that
        \begin{equation}\label{eq: teo 1.2}
        \begin{split}
            (\log\mathcal{L}_V)f(x) & = -\int_{B(x,1)} (f(y)-f(x)) \int_0^\infty \frac{T_t^V(x,y)}{t} \, dt \, dy \\
            & \quad - \int_{\mathbb{R}^d \setminus B(x,1)} f(y) \int_0^\infty \frac{T_t^V(x,y)}{t} \, dt \, dy - K(x)f(x),
        \end{split}
        \end{equation}
        for almost every $x \in \mathbb{R}^d$, where
        \begin{equation*}
        \begin{split}
            K(x) & = 2 \log \rho(x) + \int_0^{\rho^2(x)} \int_{\mathbb{R}^d} \frac{T_t^V(x,y) - T_t(x-y)}{t} \, dy \, dt \\
            & \quad - \int_0^{\rho^2(x)} \int_{\mathbb{R}^d \setminus B(x,1)} \frac{T_t^V(x,y)}{t} \, dy \, dt \\
            & \quad + \int_{\rho^2(x)}^\infty \int_{B(x,1)} \frac{T_t^V(x,y)}{t} \, dy \, dt +\gamma,
        \end{split}
        \end{equation*}
        and $\gamma$ denotes the Euler-Mascheroni constant.

        \item \label{item: teo 1.2 - b} For every $1 < p \leq \infty$,
        \begin{equation*}
            \lim_{s \rightarrow 0^{+}} \frac{\mathcal{L}_V^s f - f}{s} = (\log \mathcal{L}_V) f, 
        \end{equation*}
        in $L^p(\mathbb{R}^d)$.
    \end{enumerate}
\end{thm}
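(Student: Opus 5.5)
We start from Theorem~\ref{thm: 1.1}\ref{item: teo 1.1 - b} and Proposition~\ref{prop: 2.1}: since $f\in C_c^\infty(\mathbb{R}^d)\subseteq D(\log\mathcal{L}_V)$, we have
\[
(\log\mathcal{L}_V)f=\lim_{m\to\infty}\int_{1/m}^m \frac{e^{-t}f-T_t^Vf}{t}\,dt \quad\text{in } L^2(\mathbb{R}^d).
\]
Passing to a subsequence, we get a.e.\ convergence. It therefore suffices to compute the pointwise limit of $\int_\varepsilon^R t^{-1}(e^{-t}f(x)-T_t^Vf(x))\,dt$ for a.e.\ $x$. We split at $t=\rho^2(x)$ and write
\[
T_t^Vf(x)=\int_{B(x,1)}T_t^V(x,y)(f(y)-f(x))\,dy+\int_{\mathbb{R}^d\setminus B(x,1)}T_t^V(x,y)f(y)\,dy+f(x)\int_{B(x,1)}T_t^V(x,y)\,dy .
\]
The first two pieces give the two integrals in \eqref{eq: teo 1.2} once Fubini is justified. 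For the first piece, \eqref{eq: 1.2} gives $\int_0^\infty T_t^V(x,y)t^{-1}\,dt\lesssim |x-y|^{-d}$, and this is integrable against $|f(y)-f(x)|\lesssim|x-y|$ on $B(x,1)$. For the second piece, \eqref{eq: 1.3} with $N$ large controls $t\to\infty$: there the kernel is bounded by $C_N t^{-d/2}(\sqrt t/\rho(x))^{-N}$, so after integrating in $t$ it is bounded by $|x-y|^{-d}$-type terms, integrable against the compactly supported $f$.

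The remaining term is $f(x)$ times
\[
\lim\int_\varepsilon^R\frac{1}{t}\Big(e^{-t}-\int_{B(x,1)}T_t^V(x,y)\,dy\Big)dt,
\]
and we identify this with $-K(x)$. On $(0,\rho^2(x))$ we write
\[
\int_{B(x,1)}T^V_t=\int_{\mathbb{R}^d}(T_t^V-T_t)+1-\int_{\mathbb{R}^d\setminus B(x,1)}T_t^V .
\]
The difference term is integrable against $dt/t$ near $0$ by \eqref{eq: 1.5}, which gives the bound $(\sqrt t/\rho(x))^\delta\|\omega\|_1$. The tail term decays like $e^{-c/t}$ by Gaussian bounds. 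On $(\rho^2(x),\infty)$ we keep $\int_{B(x,1)}T_t^V(x,y)\,dy/t$, which is integrable by \eqref{eq: 1.3}. The leftover scalar quantity is
\[
\int_0^{\rho^2}\frac{e^{-t}-1}{t}\,dt+\int_{\rho^2}^\infty\frac{e^{-t}}{t}\,dt .
\]
By the classical identity $\int_0^a\frac{1-e^{-t}}{t}\,dt-\int_a^\infty\frac{e^{-t}}{t}\,dt=\log a+\gamma$ with $a=\rho^2(x)$, this equals $-(2\log\rho(x)+\gamma)$. Collecting the signs reproduces exactly $K(x)$.

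The main obstacle is the bookkeeping that allows the order of integration to be interchanged for each piece and the limits $\varepsilon\to0$, $R\to\infty$ to be exchanged with the $y$-integrals. This needs uniform dominating functions, built from \eqref{eq: 1.2}, \eqref{eq: 1.3} and \eqref{eq: 1.5}, with constants depending on $x$ only through $\rho(x)$.

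For \ref{item: teo 1.2 - b}, we use the subordination formula
\[
\mathcal{L}_V^sf=\frac{1}{\Gamma(-s)}\int_0^\infty\big(T_t^Vf-f\big)\frac{dt}{t^{1+s}} .
\]
Then
\[
\frac{\mathcal{L}_V^sf-f}{s}-\log\mathcal{L}_Vf=\int_0^\infty \Big(\frac{t^{-s}}{s\,\Gamma(-s)}\cdot\dots\Big)
\]
can be written, by the same splitting, as a family of kernel integrals whose integrands converge pointwise as $s\to0^+$. The bound $|1/(s\Gamma(-s))+1|=O(s)$ handles the constants. The $L^p$ estimates follow from Minkowski's integral inequality together with the bounds on the pieces: the local piece is controlled by $\|\nabla f\|_\infty$, and the tails by $T_t^V$ being a contraction on $L^p$ with the decay \eqref{eq: 1.3}. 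The factor $t^{-s}$ with $s$ small is harmless on $(0,1)$ and is absorbed by the $(\sqrt t/\rho)^{-N}$ decay for large $t$. The case $p=\infty$ uses the uniform boundedness of $\rho$ on the support neighbourhood, and the decay away from the support of $f$. Dominated convergence in $s$ then completes the argument.
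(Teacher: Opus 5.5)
\textbf{Part (a).} Your proof of (a) is correct and takes a genuinely different route from the paper.

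The paper starts from Theorem~\ref{thm: 1.1}\ref{item: teo 1.1 - a}, writes $\mathcal{L}_V^s f$ through the subordination formula, and computes $\lim_{s\to0^+}H_s(x)$ pointwise. Every piece therefore requires comparing $t^{-1-s}$ with $t^{-1}$ and using $s\Gamma(-s)\to-1$. The constant $-2\log\rho(x)-\gamma$ is extracted from $\frac1s\bigl(\rho(x)^{-2s}/\Gamma(1-s)-1\bigr)$.

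You start instead from the Frullani form in Theorem~\ref{thm: 1.1}\ref{item: teo 1.1 - b}, so the weight is exactly $1/t$ from the outset. This buys several simplifications:
\begin{enumerate}
\item For fixed $x$, every piece is an absolutely convergent integral. Here \eqref{eq: 1.2} alone suffices; you do not need \eqref{eq: 1.3}.
\item The limits $\varepsilon\to0$, $R\to\infty$ and the use of Fubini are immediate.
\item The constant comes from the identity $\int_0^a\frac{1-e^{-t}}{t}\,dt-\int_a^\infty\frac{e^{-t}}{t}\,dt=\log a+\gamma$. This also shows transparently that $K$ carries $+\gamma$, as in the statement. The $K(x,r)$ displayed in the paper's proof has $-\gamma$, which is a slip.
\end{enumerate}

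What your route does not buy is (b). That statement concerns $s\to0^+$, so it must go through the subordination formula anyway. The paper reuses its pointwise $s$-computation for (b). It upgrades the computation to uniform convergence on $B(0,2r)$, using that $\rho$ is bounded above and below on compact sets. It then adds the $s$-uniform bound $C\|f\|_{L^1}|x|^{-d}$ off $B(0,2r)$, which is small in $L^p$ near infinity precisely when $p>1$.

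\textbf{Part (b).} Your sketch can be turned into a complete and shorter proof, but not quite as written. The cleanest way to run Minkowski plus dominated convergence is to split at $t=1$ rather than at $\rho^2(x)$ and $B(x,1)$:
\[
\frac{\mathcal{L}_V^s f-f}{s}=\frac{1}{s\Gamma(-s)}\left(\int_0^1\frac{T_t^Vf-f}{t^{1+s}}\,dt+\int_1^\infty\frac{T_t^Vf}{t^{1+s}}\,dt\right)-\frac{f}{s}\left(1+\frac{1}{s\Gamma(-s)}\right).
\]
Here $1+\frac{1}{s\Gamma(-s)}=1-\frac{1}{\Gamma(1-s)}=\gamma s+O(s^2)$, and this is exactly what disposes of the constant.

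Two of your claims then need correcting.
\begin{enumerate}
\item On $(0,1)$ you need $\|T_t^Vf-f\|_{L^p}\le Ct^{\min\{1,\delta\}/2}$. The gradient of $f$ only controls $\int T_t^V(x,y)(f(y)-f(x))\,dy$. The non-Markovian part $f(x)\bigl(T_t^V(1)(x)-1\bigr)$ needs \eqref{eq: 1.5} together with $\inf_{\supp f}\rho>0$. This is needed for every $p$, not only for $p=\infty$.
\item On $(1,\infty)$ you need $\int_1^\infty t^{-1}\|T_t^Vf\|_{L^p}\,dt<\infty$, and contractivity does not give this. Use $\|T_t^Vf\|_{L^p}\le Ct^{-\frac d2(1-\frac1p)}\|f\|_{L^1}$, which follows from \eqref{eq: 1.2}; this is where $p>1$ enters. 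Alternatively, use \eqref{eq: 1.3} with the factor involving $\rho(y)$, $y\in\supp f$, since $\rho(x)$ need not be bounded.
\end{enumerate}

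The $L^p$ limit then agrees a.e.\ with the $L^2$ limit, which is $(\log\mathcal{L}_V)f$ by Theorem~\ref{thm: 1.1}\ref{item: teo 1.1 - a}. In this form, (b) needs neither the pointwise formula of (a) nor the uniform convergence on compact sets used in the paper.
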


Note that the function $K$ is not constant. This reflects precisely the difference mentioned earlier: the semigroup $\{T_t^V\}_{t>0}$ is not Markovian, unlike the classical heat semigroup. Consequently, the constant term in the pointwise representation of $\log \mathcal{L}_V$ becomes a non-trivial function $K(x)$ depending on the auxiliary function $\rho(x)$, which encodes information about the potential $V$. This fact, combined with the lack of an explicit formula for the heat kernel $T_t^V(x,y)$---we only have the estimates \eqref{eq: 1.2}, \eqref{eq: 1.3}, \eqref{eq: 1.4} and \eqref{eq: 1.5}---makes the proof of Theorem~\ref{thm: 1.2} substantially more intricate than that of \cite[Theorem~1.1]{ChW} for the Euclidean case.
A small modification of the proof of Theorem~\ref{thm: 1.2} allows us to see that if $f \in \Lip^\theta(\mathbb{R}^d)$, the $\theta$-Lipschitz space on $\mathbb{R}^d$, for some $\theta \in (0,1]$, and 
\begin{equation*}
    \int_{\mathbb{R}^d} \frac{|f(y)|}{(1+|y|)^{d}} \, dy < \infty,
\end{equation*}
then
\begin{equation*}
\begin{split}
    \lim_{s \rightarrow 0^+} \frac{1}{s} \left( \frac{1}{\Gamma(-s)} \int_0^\infty \frac{T_t^V f(x) - f(x)}{t^{s+1}} \, dt - f(x) \right) & = - \int_{B(x,1)} (f(y)-f(x)) \int_0^\infty \frac{T_t^V(x,y)}{t} \, dt \, dy \\
    & \quad -\int_{\mathbb{R}^d \setminus B(x,1)} f(y) \int_0^\infty \frac{T_t^V(x,y)}{t} \, dt \, dy - K(x)f(x),
\end{split}
\end{equation*}
for $x \in \mathbb{R}^d$. Thus, the domain of $\log \mathcal{L}_V$ can be extended according to the last identity.

Suppose now $V(x) = 1$, $x \in \mathbb{R}^d$. In this case $T_t^V = e^{-t} T_t$, $t > 0$. By manipulating formula~\eqref{eq: teo 1.2} we get for $f$ as in Theorem~\ref{thm: 1.2}:
\begin{equation*}
    \begin{split}
        (\log \mathcal{L}_V) f(x) & = -\int_{\mathbb{R}^d} (f(y)-f(x)) \int_0^\infty \frac{e^{-t}T_t(x-y)}{t} \, dt \, dy \\
        & \quad - f(x) \left( 2\log\rho(x) + \int_0^{\rho^2(x)} \frac{e^{-t} - 1}{t} \, dt + \int_{\rho^2(x)}^\infty \frac{e^{-t}}{t} \, dt - \Gamma'(1) \right).
    \end{split}
\end{equation*}
Since
\begin{equation*}
    \log(z) + \int_0^z \frac{e^{-t} - 1}{t} \, dt + \int_z^\infty \frac{e^{-t}}{t} \, dt = -\gamma \quad \text{for } z \in (0,\infty),
\end{equation*}
and $\Gamma'(1) = -\gamma$, we conclude that 
\begin{equation*}
    (\log \mathcal{L}_V) f(x) = -\int_{\mathbb{R}^d} (f(y)-f(x)) \int_0^\infty \frac{e^{-t}}{t} T_t(x-y) \, dt \, dy, \quad \text{a.e. } x \in \mathbb{R}^d.
\end{equation*}

On the other hand, we have that
\begin{equation*}
    \int_0^\infty \frac{e^{-t}}{t} T_t(x-y) \, dt = \int_0^\infty \frac{e^{-t - \frac{|x-y|^2}{4t}}}{(4\pi t)^{d/2} t} \, dt = \frac{2}{(4\pi)^{d/2}} \left(\frac{|x-y|}{2}\right)^{-d/2} \mathcal{K}_{d/2}(|x-y|), \quad x, y \in \mathbb{R}^d,
\end{equation*}
where $\mathcal{K}_\nu$ denotes the modified Bessel function of the second kind of order $\nu$. Thus, \cite[Theorem~1.1]{Feu1} is a special case of Theorem~\ref{thm: 1.2} here.

Fall and Felli (\cite{FaFe2} and \cite{FaFe1}) studied the essential self-adjointness and established unique continuation properties for a relativistic Schr\"odinger operator with a singular homogeneous potential defined by 
\begin{equation*}
    H = (-\Delta + m^2)^s - \frac{a(x/|x|)}{|x|^{2s}} - h(x), 
\end{equation*}
where $m \geq 0$ and $s \in (0,1)$. In~\cite[Eq.~(1.3)]{FaFe1} the authors give a pointwise representation for the operator $(-\Delta + m^2)^s$. A pointwise representation for the operator $\log(-\Delta + m^2)$ appears here as a special case of Theorem~\ref{thm: 1.2}.

In~\cite{BHS} and~\cite{MSTZ}, Lipschitz spaces in the Schr\"odinger setting were introduced as follows: we say that a function $f \in \Lip_V^\alpha$ with $0 < \alpha < 1$ when 
\begin{equation*}
    \|\rho^{-\alpha} f\|_\infty < \infty \quad \text{and} \quad \sup_{|h|>0} \frac{\|f(\cdot + h) - f(\cdot)\|_\infty}{|h|} < \infty .
\end{equation*}
The definition of $\Lip_V^\alpha$ was extended to $0 < \alpha < 2$ in~\cite{dLT}.

For every $\alpha > 0$ we consider the negative fractional power $\mathcal{L}_V^{-\alpha}$ of $\mathcal{L}_V$ defined by
\begin{equation*}
    \mathcal{L}_V^{-\alpha} f = \int_0^\infty \lambda^{-\alpha} \, dE_V(\lambda) f, \quad f \in D(\mathcal{L}_V^{-\alpha}), 
\end{equation*}
where
\begin{equation*}
    D(\mathcal{L}_V^{-\alpha}) = \left\{ f \in L^2(\mathbb{R}^d) : \int_0^\infty \lambda^{-2\alpha} \, d\mu_{f,f}^V(\lambda) < \infty \right\}.
\end{equation*}
We are going to see in Proposition~\ref{propo: 2.5} that $\mathcal{L}_V^{-\alpha}$ can be expressed in terms of the semigroup $\{T_t^V\}_{t>0}$ as 
\begin{equation}\label{eq: 1.6}
    \mathcal{L}_V^{-\alpha} f = \frac{1}{\Gamma(\alpha)} \int_0^\infty T_t^V(f) t^{\alpha-1} \, dt,
\end{equation}
By using~\eqref{eq: 1.1} we can see that the integral in~\eqref{eq: 1.6} allows us to extend $\mathcal{L}_V^{-\alpha}$ to $L^p(\mathbb{R}^d)$ as a bounded operator from $L^p(\mathbb{R}^d)$ into $L^q(\mathbb{R}^d)$ where $1/q = 1/p - 2\alpha/d$, $1 < p < d/(2\alpha)$, and from $L^1(\mathbb{R}^d)$ into $L^{d/(d-2\alpha), \infty}(\mathbb{R}^d)$. As can be seen in equation~\eqref{eq: 1.2}, the integral kernel of the operator $T_t^V$ has a better behavior far away from the diagonal than the classical heat kernel $T_t$. In~\cite{DGMTZ} it was proved that $\mathcal{L}_V^{-\alpha}$ maps $L^{d/(2\alpha)}(\mathbb{R}^d)$ into a $\textup{BMO}$-space denoted by $\textup{BMO}_{\mathcal{L}_V}$ adapted to the Schr\"odinger setting. In \cite[Theorem 1.2]{MSTZ} (see also~\cite[Theorem 1.6]{dLT}) the authors proved that $\mathcal{L}_V^{-\alpha}$ maps $\Lip_V^\theta$ into $\Lip_V^{\theta - 2\alpha}$ when $0 < \theta < 1$, $\alpha > 0$ and $\theta + 2\alpha < 1$.

Suppose now that $0$ does not belong to the spectrum $\sigma(\mathcal{L}_V)$. Then, there exists $a > 0$ such that $\sigma(\mathcal{L}_V) \subseteq [a, +\infty)$. Since the function $\phi(\lambda) = \lambda^{-\alpha}$, $\lambda \in (0,+\infty)$ is bounded on $[a,+\infty)$, $\mathcal{L}_V^{-\alpha}$ is a bounded operator on $L^2(\mathbb{R}^d)$, for every $\alpha > 0$. Moreover, $\mathcal{L}_V^{-\alpha} \circ \mathcal{L}_V^{-\beta} = \mathcal{L}_V^{-(\alpha+\beta)}$, for every $\alpha, \beta > 0$, and $\lim_{\alpha \rightarrow 0^+} \mathcal{L}_V^{-\alpha} f = f$ in $L^2(\mathbb{R}^d)$ for every $f \in L^2(\mathbb{R}^d)$.

The family $\{\mathcal{L}_V^{-\alpha}\}_{\alpha>0}$ is a $C_0$-semigroup of operators on $L^2(\mathbb{R}^d)$. We will prove in Proposition~\ref{propo: 2.3} that the infinitesimal generator of $\{\mathcal{L}_V^{-\alpha}\}_{\alpha>0}$ is the logarithmic operator $\log \mathcal{L}_V$. We recall here that $\sigma(H) = \{2k+d\}_{k\in \mathbb{N}_0}$ when $H$ represents the harmonic oscillator operator.

Our final objective is to study the following Cauchy problem

\begin{equation}\label{eq: 1.7}
    \begin{cases}
        \dfrac{\partial u}{\partial t} (x,t) + (\log \mathcal{L}_V) (u(\cdot,t))(x) = 0, & x \in \mathbb{R}^d, \; t > 0,\\
        u(x,0) = f(x), & x \in \mathbb{R}^d,
    \end{cases}
\end{equation}
in $\mathbb{R}^d \times (0,\infty)$. If $0 \notin \sigma(\mathcal{L}_V)$, then, for every $f \in L^2(\mathbb{R}^d)$, $u(x,t) = \mathcal{L}_V^{-t} (f)(x)$ is an $L^2$-solution of~\eqref{eq: 1.7}.

\begin{thm}\label{thm: 1.3}
    Let $d \geq 3$ and $V \in \RH_q$ with $q > d/2$. Suppose that $f \in L^1(\mathbb{R}^d) \cap \Lip_V^\theta \cap C^1(\mathbb{R}^d)$, for some $\theta \in (0,1)$, and $\nabla f \in L^1(\mathbb{R}^d)$. Then, the function $u$ defined by
    \begin{equation*}
        u(x,t) = \frac{1}{\Gamma(t)} \int_0^\infty T_z^V (f)(x) z^{t-1} \, dz, \quad x \in \mathbb{R}^d \text{ and } 0 < t < 1 - \theta,
    \end{equation*}
    satisfies the following properties:
    \begin{enumerate}
        \item $u$ and $\dfrac{\partial u}{\partial t}$ are continuous functions in $\mathbb{R}^d \times (0,1-\theta)$. 
        \item $\dfrac{\partial u}{\partial t}(x,t) = -(\log \mathcal{L}_V)(u(\cdot,t))(x)$, for $(x,t) \in \mathbb{R}^d \times (0,1-\theta)$.  
        \item For every $x \in \mathbb{R}^d$, $\displaystyle \lim_{t\rightarrow 0^+} u(x,t) = f(x)$.
    \end{enumerate}
\end{thm}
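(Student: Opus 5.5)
The idea is to work pointwise with the subordinated integral $u(x,t)=\Gamma(t)^{-1}\int_0^\infty T_z^V f(x)\,z^{t-1}\,dz$ and to use the kernel bounds \eqref{eq: 1.2}--\eqref{eq: 1.5} to control it near $z=0$ and $z=\infty$.

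\textbf{Convergence and continuity.} The basic estimates are
\[
|T_z^Vf(x)|\le \|f\|_\infty,\qquad |T_z^V f(x)|\le C z^{-d/2}\|f\|_1 .
\]
They give absolute convergence for $0<t<d/2$, and in particular on $(0,1-\theta)$. To get uniform control in $x$ I would also use the Lipschitz scale. From $f\in\Lip_V^\theta$ one has $|f(x)|\le C\rho(x)^\theta$. Combining this with \eqref{eq: 1.3} gives
\[
|T_z^Vf(x)|\lesssim \rho(x)^\theta\bigl(1+\sqrt z/\rho(x)\bigr)^{-N},
\]
after the usual comparison $\rho(y)\sim\rho(x)$ with polynomial loss from \eqref{eq: rox_vs_roy}. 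The integrand $T_z^Vf(x)\,z^{t-1}(\log z)^k$ is continuous in $(x,z,t)$ and dominated locally uniformly, for $k=0,1$, by an integrable function of $z$. Dominated convergence then shows that $u$ and
\[
\partial_t u=-\frac{\Gamma'(t)}{\Gamma(t)}\,u+\frac{1}{\Gamma(t)}\int_0^\infty T_z^Vf(x)\,z^{t-1}\log z\,dz
\]
are continuous. This settles (a).

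\textbf{The limit $t\to0^+$.} For (c) I would write
\[
u(x,t)=\frac1{\Gamma(t)}\int_0^1 \bigl(T_z^Vf(x)-f(x)\bigr)z^{t-1}\,dz+\frac{f(x)}{t\Gamma(t)}+\frac1{\Gamma(t)}\int_1^\infty T_z^Vf(x)\,z^{t-1}\,dz .
\]
The middle term tends to $f(x)$ because $t\Gamma(t)\to1$. The last term is $O(t)$, since $|T_z^Vf(x)|\lesssim z^{-d/2}$ and $1/\Gamma(t)\sim t$. For the first term, the regularity $f\in C^1$, $\nabla f\in L^1$, together with \eqref{eq: 1.5}, gives
\[
|T_z^Vf(x)-f(x)|\le |T_zf(x)-f(x)|+|(T_z^V-T_z)f(x)|\lesssim z^{\min(\theta,\delta)/2}
\]
locally. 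Hence the first integral is bounded, and multiplying by $1/\Gamma(t)\to0$ kills it.

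\textbf{The equation.} This is the main obstacle. The target is $\partial_tu=-(\log\mathcal L_V)u(\cdot,t)$, where $\log\mathcal L_V$ is applied to $u(\cdot,t)$ through the extended pointwise formula \eqref{eq: teo 1.2}. That formula is valid for Lipschitz functions with $\int|g|(1+|y|)^{-d}<\infty$, and this remark follows Theorem~\ref{thm: 1.2}.

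I would proceed in three steps.
\begin{enumerate}
\item Show that $u(\cdot,t)=\mathcal L_V^{-t}f$ lies in $\Lip_V^{\theta+2t}$ with $\theta+2t<1$, using \cite[Theorem 1.2]{MSTZ} or \cite[Theorem 1.6]{dLT}; alternatively, use $\Lip^{\theta}$ regularity through \eqref{eq: 1.4}. Show also that $u(\cdot,t)\in L^1$ by the $L^1$-contractivity of $T_z^V$ near $z=0$ and the $L^1$ decay for large $z$. Together these give admissibility for the extended formula.
\item By that remark, $(\log\mathcal L_V)u(\cdot,t)(x)=\lim_{s\to0^+}s^{-1}\bigl(\mathcal L_V^s u(\cdot,t)(x)-u(\cdot,t)(x)\bigr)$, with $\mathcal L_V^s$ given by the Balakrishnan formula. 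By Fubini and the semigroup property, $\mathcal L_V^s\mathcal L_V^{-t}f=\mathcal L_V^{-(t-s)}f=u(\cdot,t-s)$ pointwise. This needs absolute integrability of $\int\!\int |T_{z+r}^Vf-T_r^Vf|\,z^{-s-1}r^{t-1}$, which I would get from \eqref{eq: 1.4} and the Lipschitz regularity for small $z$ and from the decay for large $z$.
\item Conclude $(\log\mathcal L_V)u(\cdot,t)(x)=\lim_{s\to0^+}\bigl(u(x,t-s)-u(x,t)\bigr)/s=-\partial_tu(x,t)$, using part (a).
\end{enumerate}

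The delicate points are the Fubini justification in step 2 and the identification of the extended pointwise formula with the $s$-derivative for non-compactly supported $u(\cdot,t)$. I would handle both with the splitting at $\rho(x)^2$ that is used for the kernel $K(x)$.
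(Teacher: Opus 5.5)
Your outline is sound, but it takes a different route from the paper in all three parts.

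\textbf{Part (a).} You dominate the integrand with two global bounds only, $|T_z^Vf|\le\|f\|_\infty$ and $|T_z^Vf|\le Cz^{-d/2}\|f\|_1$. You should say why $f$ is bounded: a globally $\theta$-H\"older function in $L^1$ is bounded. The paper instead uses the uniform-in-$s$ bound \eqref{eq: 3.7} for $\mathcal{L}_V^{-s}$ on $\Lip_V$ spaces, together with a splitting at $|y-x_0|=\rho(x_0)$.

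\textbf{Part (c).} The paper compares with the Euclidean Riesz potential through \eqref{eq: 1.5}, then uses the argument of \cite[Proposition~2.3]{ChV1} and a Gagliardo--Nirenberg inequality; this is where $f\in C^1$ and $\nabla f\in L^1$ enter. Your splitting at $z=1$, with a rate $|T_z^Vf(x)-f(x)|\lesssim z^{\epsilon}$, is shorter. The rate already follows from the H\"older continuity of $f$ and \eqref{eq: 1.5}, so your argument never needs $f\in C^1$ or $\nabla f\in L^1$.

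\textbf{Part (b).} The paper identifies both one-sided $t$-derivatives with $-(\log\mathcal{L}_V)u(\cdot,t)$:
\begin{enumerate}
\item the right one from Corollary~\ref{coro: 2.1} and $\mathcal{L}_V^{-h}\mathcal{L}_V^{-t}f=\mathcal{L}_V^{-(t+h)}f$, which needs only Tonelli since all kernels are positive;
\item the left one from $u(\cdot,t-h)-u(\cdot,t)=(I-\mathcal{L}_V^{-h})u(\cdot,t-h)$, with a dominated convergence argument uniform in $h$ that rests on \eqref{eq: 3.7}.
\end{enumerate}
You instead observe that (a) already gives differentiability in $t$, so only one quotient has to be identified. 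You take the left one, via the Balakrishnan formula and $\mathcal{L}_V^{s}\mathcal{L}_V^{-t}f=\mathcal{L}_V^{-(t-s)}f$. Like the paper, you rely on the remark after Theorem~\ref{thm: 1.2} for non-compactly supported functions. This removes the uniform left-derivative argument. However, the Balakrishnan integrand is signed, so you need a genuine Fubini bound. After Fubini you also need the scalar identity
\[
\frac{1}{\Gamma(-s)}\int_0^\infty \bigl((w-z)_+^{t-1}-w^{t-1}\bigr)z^{-s-1}\,dz=\frac{\Gamma(t)}{\Gamma(t-s)}\,w^{t-s-1},
\]
which ``by Fubini and the semigroup property'' leaves implicit. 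Pairing your differentiability observation with the paper's right quotient \eqref{eq: 3.1}--\eqref{eq: 3.2} would be simpler still.

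\textbf{Steps to repair (none fatal).}
\begin{enumerate}
\item The Fubini bound. \eqref{eq: 1.4} is a spatial estimate and does not control $|T_{z+r}^Vf-T_r^Vf|$. What you need is $\sup_w|T_z^Vf(w)-f(w)|\lesssim\min(1,z^{\epsilon})$, applied through $T_{z+r}^Vf-T_r^Vf=T_r^V(T_z^Vf-f)$. Combine it with $|T_r^Vg|\le Cr^{-d/2}\|g\|_1$ for large $r$, and interpolate the two bounds to get integrability in both variables. Because $T_z^V1\neq 1$, this uniform rate does not follow from the H\"older seminorm alone. The term $|f(w)|\,(1-T_z^V1(w))$ must be handled with \eqref{eq: 1.5} together with $|f|\lesssim\min(\rho^\theta,\|f\|_\infty)$.
\item The range of $t$. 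With the correct gain $\Lip_V^\theta\to\Lip_V^{\theta+2t}$, your condition $\theta+2t<1$ restricts the cited mapping theorem to $t<(1-\theta)/2$. For $t\in[(1-\theta)/2,1-\theta)$ you need your alternative. Using \eqref{eq: 1.4} with the two global bounds gives $u(\cdot,t)\in\Lip^{\theta'}$ for every $\theta'<\min(2t,\eta)$, not $\Lip^\theta$. That is still enough for the remark after Theorem~\ref{thm: 1.2}.
\item The integrability claim. $u(\cdot,t)\in L^1$ is unjustified, since $\|T_z^Vf\|_{L^1}$ need not decay integrably against $z^{t-1}$. You only need $\int_{\RR^d}|u(y,t)|(1+|y|)^{-d}\,dy<\infty$, and this follows from $u(\cdot,t)\in L^{d/(d-2t),\infty}(\RR^d)$.
\end{enumerate}
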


We prove Theorem~\ref{thm: 1.1} in Section~\ref{sec: 2}, Theorem~\ref{thm: 1.2} in Section~\ref{sec: 3} and Theorem~\ref{thm: 1.3} in Section~\ref{sec: 4}. Throughout this paper $C$ and $c$ always represent positive constants that may change in each occurrence.

\section{About the logarithmic operator of \texorpdfstring{$\mathcal{L}_V$}{LV}}\label{sec: 2}

In this section we prove Theorem~\ref{thm: 1.1} together with some properties of the logarithmic operator of $\mathcal{L}_V$. As we mentioned before, we define 
\[
(\log \mathcal{L}_V) f = \int_0^\infty \log(\lambda) \, dE_V(\lambda) f, \quad f \in D(\log \mathcal{L}_V),
\]
where
\begin{equation*}
    D(\log \mathcal{L}_V) = \left\{ 
    f \in L^2(\mathbb{R}^d) : \int_{0}^\infty |\log(\lambda)|^2 \, d\mu_{f,f}^V(\lambda) < \infty \right\}.
\end{equation*}

Here $E_V$ denotes the spectral measure of $\mathcal{L}_V$ and, for every $f \in L^2(\mathbb{R}^d)$, $\mu_{f,f}^V$ is the Borel measure defined by $\mu_{f,f}^V(A) = \langle E_V(A) f, f \rangle$ for each Borel set $A \subseteq (0,\infty)$.

\begin{prop}\label{prop: 2.1}
    Let $d \geq 3$ and $V \in \RH_q$ with $q > d/2$. Then $D(\mathcal{L}_V^{-\beta}) \cap D(\mathcal{L}_V^{\alpha}) \subseteq D(\log \mathcal{L}_V)$, for every $\alpha, \beta \in (0,1)$, and $C_c^\infty (\mathbb{R}^d) \subseteq D(\log \mathcal{L}_V)$.
\end{prop}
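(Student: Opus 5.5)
The first inclusion is purely spectral. Take $f \in D(\mathcal{L}_V^{-\beta}) \cap D(\mathcal{L}_V^{\alpha})$ and split the integral defining membership in $D(\log\mathcal{L}_V)$ at $\lambda=1$:
\[
\int_0^\infty |\log\lambda|^2\, d\mu^V_{f,f}(\lambda) = \int_{(0,1)} |\log\lambda|^2\, d\mu^V_{f,f}(\lambda) + \int_{[1,\infty)} |\log\lambda|^2\, d\mu^V_{f,f}(\lambda).
\]
For small $\lambda$ we use $|\log\lambda|^2 \le C_\beta \lambda^{-2\beta}$ on $(0,1)$. For large $\lambda$ we use $|\log\lambda|^2 \le C_\alpha \lambda^{2\alpha}$ on $[1,\infty)$. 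Both integrals are then finite, by the definitions of $D(\mathcal{L}_V^{-\beta})$ and $D(\mathcal{L}_V^{\alpha})$. We also use $\mu^V_{f,f}(\{0\})=0$, which holds because $E_V(\{0\})=0$.

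For the second inclusion, it suffices to show that $C_c^\infty(\mathbb{R}^d) \subseteq D(\mathcal{L}_V^{\alpha}) \cap D(\mathcal{L}_V^{-\beta})$ for some $\alpha,\beta\in(0,1)$.

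\emph{Large spectral values.} If $f \in C_c^\infty$, then $f \in D(\mathcal{L}_V)$, since $\mathcal{L}_V f = -\Delta f + Vf$. Here $Vf\in L^2$ needs $V \in L^2_{\rm loc}$, which follows from $V\in \RH_q$ with $q>d/2$ only when $q\ge 2$. To avoid this issue we instead use the form domain: $f\in D_T$, because $\nabla f\in L^2$ and $\int V|f|^2<\infty$ with $V\in L^1_{\rm loc}$. Hence $\int_0^\infty \lambda\, d\mu^V_{f,f}(\lambda) = T(f,f) < \infty$, so $f\in D(\mathcal{L}_V^{1/2})$. Since $\lambda^{2\alpha}\le 1+\lambda$, we get $f\in D(\mathcal{L}_V^{\alpha})$ for every $\alpha\in(0,1/2]$.

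\emph{Small spectral values.} We must check $\int_0^\infty \lambda^{-2\beta}\,d\mu^V_{f,f}<\infty$ for some $\beta$. Using the Gamma-function identity $\lambda^{-2\beta} = \Gamma(2\beta)^{-1}\int_0^\infty e^{-\lambda t} t^{2\beta-1}\,dt$ and Tonelli, this quantity equals
\[
\frac{1}{\Gamma(2\beta)}\int_0^\infty \langle T_t^V f, f\rangle\, t^{2\beta-1}\, dt.
\]
The integrand is nonnegative since $\langle T_t^V f,f\rangle = \int e^{-\lambda t}\,d\mu^V_{f,f}$. To bound it we use the kernel bound \eqref{eq: 1.3}:
\[
|\langle T_t^V f,f\rangle| \le \|f\|_1^2 \sup_{x,y\in \supp f} T_t^V(x,y) \le C_N \|f\|_1^2\, t^{-d/2}\Bigl(1+\frac{\sqrt t}{\rho(x)}\Bigr)^{-N}.
\]
By \eqref{eq: rox_vs_roy}, $\rho$ is bounded above and below on the compact set $\supp f$, so this is $\le C t^{-d/2-N/2}$ for $t\ge1$. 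For $t\le 1$ we use the trivial bound $\|f\|_2^2$. The integral over $(0,1)$ converges because $2\beta-1>-1$. The integral over $(1,\infty)$ converges because, with $N$ large, $t^{2\beta-1-d/2-N/2}$ is integrable; in fact, since $d\ge3$, even $N=0$ suffices. Therefore $f\in D(\mathcal{L}_V^{-\beta})$ for every $\beta\in(0,1)$.

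The main obstacle is this last step, namely controlling the bottom of the spectrum, where $0$ may lie in $\sigma(\mathcal{L}_V)$. The decay of the heat kernel for large $t$ (already $t^{-d/2}$ from \eqref{eq: 1.2} with $d\ge 3$, and improved by \eqref{eq: 1.3}) is what makes this work. The bound also justifies the Tonelli interchange, since everything is nonnegative.
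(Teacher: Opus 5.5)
Your proof is correct. The first inclusion is exactly the paper's argument: split at $\lambda=1$ and bound $|\log\lambda|^2$ by $\lambda^{-2\beta}$ or $\lambda^{2\alpha}$. For $C_c^\infty(\mathbb{R}^d)$, however, you argue differently from the paper at both ends of the spectrum.

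\emph{Large $\lambda$.} The paper uses $C_c^\infty(\mathbb{R}^d)\subseteq D(\mathcal{L}_V)\subseteq D(\mathcal{L}_V^\alpha)$, which tacitly needs $Vf\in L^2(\mathbb{R}^d)$. Your form-domain identity $\int_0^\infty\lambda\,d\mu^V_{f,f}(\lambda)=T(f,f)$ needs only $V\in L^1_{\mathrm{loc}}$, so it is the more careful choice. Indeed, for $d=3$ and $q<2$ one can have $V\notin L^2_{\mathrm{loc}}$, e.g.\ $V(x)=|x|^{-a}$ with $\frac32\le a<\frac3q$.

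\emph{Small $\lambda$.} The paper quotes the Hardy--Littlewood--Sobolev bound $\mathcal{L}_V^{-\beta}\colon L^p\to L^2$ for the semigroup-defined fractional integral, which rests only on \eqref{eq: 1.2}, and concludes $L^p\cap L^2\subseteq D(\mathcal{L}_V^{-\beta})$. Two things are worth noting about that route:
\begin{enumerate}
\item It leaves implicit why the semigroup formula agrees with the spectral operator for an $f$ not yet known to lie in the domain. Proposition~\ref{propo: 2.5} goes the other way: it presupposes membership.
\item With the correct exponent $p=2d/(d+4\beta)$ (not $2d/(d+2\beta)$), it only works for $4\beta<d$.
\end{enumerate}
Your identity $\int_0^\infty\lambda^{-2\beta}\,d\mu^V_{f,f}(\lambda)=\Gamma(2\beta)^{-1}\int_0^\infty\langle T_t^Vf,f\rangle\,t^{2\beta-1}\,dt$ is justified by Tonelli because everything is nonnegative. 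It bypasses that identification entirely, and the extra decay in \eqref{eq: 1.3} gives you every $\beta\in(0,1)$.

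One small correction: your aside that $N=0$ already suffices holds only for $\beta<d/4$, since $t^{2\beta-1-d/2}$ must be integrable at infinity. For $d=3$ this excludes $\beta\in[\frac34,1)$. This is harmless, because a single $\beta$ is enough for the inclusion. In fact it shows that, via \eqref{eq: 1.2} alone, $C_c^\infty(\mathbb{R}^d)\subseteq D(\log\mathcal{L}_V)$ holds for every nonnegative $V\in L^1_{\mathrm{loc}}(\mathbb{R}^d)$ when $d\ge 3$, so this part of the statement does not need the $\RH_q$ hypothesis.
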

\begin{proof}
    Let $f \in L^2(\mathbb{R}^d)$ and $\alpha, \beta \in (0,1)$. We can write
    \begin{equation*}
        \begin{split}
            \int_0^\infty (\log \lambda)^2 \, d\mu_{f,f}^V(\lambda) & \leq C 
            \left( \int_0^1 \lambda^{-2\beta} \, d\mu_{f,f}^V(\lambda) + \int_1^\infty \lambda^{2\alpha} \, d\mu_{f,f}^V(\lambda)\right) \\
            & \leq C 
            \left( \int_0^\infty \lambda^{-2\beta} \, d\mu_{f,f}^V(\lambda) + \int_0^\infty \lambda^{2\alpha} \, d\mu_{f,f}^V(\lambda)\right).
        \end{split}
    \end{equation*}
    We deduce that $f \in D(\log \mathcal{L}_V)$ provided that $f \in D(\mathcal{L}_V^{-\beta}) \cap D(\mathcal{L}_V^{\alpha})$.

    On the other hand, $D(\mathcal{L}_V) \subseteq D(\mathcal{L}_V^\alpha)$. Hence, $C_c^\infty(\mathbb{R}^d) \subseteq D(\mathcal{L}_V^\alpha)$. Moreover, $\mathcal{L}_V^{-\beta}$ is bounded from $L^p(\mathbb{R}^d)$ into $L^q(\mathbb{R}^d)$ when $1/q = 1/p - (2\beta)/d$ and $1 < p < d/(2\beta)$. In particular, $\mathcal{L}_V^{-\beta}$ is bounded from $L^{(2d)/(d+2\beta)}(\mathbb{R}^d)$ into $L^2(\mathbb{R}^d)$, so $L^{(2d)/(d+2\beta)}(\mathbb{R}^d) \subseteq D(\mathcal{L}_V^{-\beta})$. Since $C_c^\infty(\mathbb{R}^d) \subseteq L^{(2d)/(d+2\beta)}(\mathbb{R}^d)$, we conclude that $C_c^\infty(\mathbb{R}^d) \subseteq D(\mathcal{L}_V^{-\beta}) \cap D(\mathcal{L}_V^\alpha) \subseteq D(\log \mathcal{L}_V)$.
\end{proof}

We define, for every $\beta \in \mathbb{R}$, $J_\beta = \mathcal{L}_V^{i\beta}=\phi_\beta(\mathcal{L}_V)$, where $\phi_\beta(\lambda)=\lambda^{i\beta}$, $\lambda\in (0,\infty)$. Since this function is bounded for $\lambda \in (0,\infty)$, the operator $J_\beta$ is bounded on $L^2(\mathbb{R}^d)$ for every $\beta \in \mathbb{R}$. We have that 
\[
J_\alpha \circ J_\beta = J_{\alpha + \beta}, \quad \alpha, \beta \in \mathbb{R}.
\]
Hence $\{J_\beta\}_{\beta \in \mathbb{R}}$ is a group of bounded operators on $L^2(\mathbb{R}^d)$.
\begin{prop}\label{propo: 2.2}
    The operator $i\log \mathcal{L}_V$ is the infinitesimal generator of the group $\{J_\beta\}_{\beta \in \mathbb{R}}$ in $L^2(\mathbb{R}^d)$. 
\end{prop}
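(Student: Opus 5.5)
The proof is a direct application of the spectral theorem. We first identify the generator $A$ of $\{J_\beta\}_{\beta\in\mathbb{R}}$ as $i\log\mathcal{L}_V$ on $D(\log\mathcal{L}_V)$. We then show that $A$ has no larger domain.

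\emph{Strong continuity.} Let $f\in L^2(\mathbb{R}^d)$. Then
\[
\|J_\beta f - f\|_{L^2(\mathbb{R}^d)}^2 = \int_0^\infty |\lambda^{i\beta}-1|^2\, d\mu_{f,f}^V(\lambda).
\]
The integrand is bounded by $4$ and tends to $0$ as $\beta\to 0$ for every $\lambda\in(0,\infty)$. Since $E_V(\{0\})=0$, the measure $\mu_{f,f}^V$ is a finite measure on $(0,\infty)$. Dominated convergence therefore shows that $\{J_\beta\}$ is a $C_0$-group. Let $A$ denote its generator.

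\emph{Inclusion $i\log\mathcal{L}_V\subseteq A$.} Take $f\in D(\log\mathcal{L}_V)$. We compute
\[
\Big\|\frac{J_\beta f - f}{\beta} - i(\log\mathcal{L}_V) f\Big\|_{L^2(\mathbb{R}^d)}^2 = \int_0^\infty \Big|\frac{e^{i\beta\log\lambda}-1}{\beta} - i\log\lambda\Big|^2 d\mu_{f,f}^V(\lambda).
\]
The elementary inequality $|e^{ix}-1|\le |x|$ bounds the integrand by $4|\log\lambda|^2$. This bound is $\mu_{f,f}^V$-integrable precisely because $f\in D(\log\mathcal{L}_V)$. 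The integrand also tends to $0$ pointwise as $\beta\to0$. Dominated convergence then gives $f\in D(A)$ and $Af=i(\log\mathcal{L}_V)f$.

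\emph{Equality of domains.} This is the main point. The cleanest route uses that both operators have $\pm1$ in their resolvent sets.
\begin{itemize}
\item Since $\log\mathcal{L}_V$ is self-adjoint, $i\log\mathcal{L}_V$ is skew-adjoint, so $I - i\log\mathcal{L}_V$ is surjective.
\item Since $J_\beta$ are unitary, the Hille--Yosida theorem puts $1$ in the resolvent set of $A$, so $I-A$ is injective.
\end{itemize}
Now let $g\in D(A)$. By surjectivity, choose $f\in D(\log\mathcal{L}_V)$ with $(I-i\log\mathcal{L}_V)f=(I-A)g$. By the previous step, $(I-A)f=(I-A)g$. Injectivity gives $g=f\in D(\log\mathcal{L}_V)$.

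Alternatively, one can argue directly. Suppose $g\in D(A)$. By Fatou's lemma applied along a sequence $\beta_n\to0$,
\[
\int_0^\infty |\log\lambda|^2\, d\mu_{g,g}^V \le \liminf_{n\to\infty}\big\|(J_{\beta_n}g-g)/\beta_n\big\|^2<\infty,
\]
so $g\in D(\log\mathcal{L}_V)$.

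Either argument completes the identification $A=i\log\mathcal{L}_V$. Equivalently, Stone's theorem gives the same conclusion once one notes that $J_\beta=e^{i\beta\log\mathcal{L}_V}$ by the functional calculus, since $\lambda^{i\beta}=e^{i\beta\log\lambda}$.
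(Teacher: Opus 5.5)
Your proposal is correct. The inclusion $i\log\mathcal{L}_V\subseteq A$ is exactly the paper's first step: the same bound $|(\lambda^{ih}-1)/h|\le|\log\lambda|$ followed by dominated convergence.

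For the reverse inclusion the paper works on the spectral side. If $f\in D(G)$ and $h_k\to0$, the functions $(\lambda^{ih_k}-1)/h_k$ are Cauchy in $L^2((0,\infty),d\mu^V_{f,f})$. They therefore converge there to some $h$, which must coincide $\mu^V_{f,f}$-a.e.\ with the pointwise limit $i\log\lambda$, and this gives $\log\lambda\in L^2(d\mu^V_{f,f})$. Your Fatou variant is the same idea in leaner form: you only need the difference quotients to be bounded in norm, not to identify an $L^2(d\mu^V_{f,f})$-limit.

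Your main route is genuinely different. The maximality argument uses that $I-i\log\mathcal{L}_V$ is onto by skew-adjointness and that $I-A$ is one-to-one because the group is isometric. It never looks at the spectral measure of an arbitrary $g\in D(A)$. It is an instance of the general fact that if $B\subseteq A$, $I-B$ is surjective and $I-A$ is injective, then $A=B$. Your Stone remark goes further, reducing everything to $J_\beta=e^{i\beta\log\mathcal{L}_V}$ via the composition rule of the functional calculus.

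The abstract routes are shorter and more conceptual. The paper's computation, and your Fatou version of it, has the advantage of transferring verbatim to the converse in Theorem 1.1(a). There $s\mapsto\mathcal{L}_V^s$ is a family of unbounded operators, so no group or resolvent structure is available.
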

\begin{proof}
    We denote by $G$ the infinitesimal generator of $\{J_\beta\}_{\beta \in \mathbb{R}}$ in $L^2(\mathbb{R}^d)$, that is, 
    \[
    Gf = \lim_{h \rightarrow 0} \frac{J_h f - f}{h}, \quad \text{in } L^2(\mathbb{R}^d),
    \]
    for every $\displaystyle f \in D(G) = \left\{ g \in L^2(\mathbb{R}^d) : \lim_{h \rightarrow 0} \frac{J_h g - g}{h} \text{ exists in } L^2(\mathbb{R}^d) \right\}$.

    Let $f \in D(\log \mathcal{L}_V)$. We are going to show that
    \[
    \lim_{h \rightarrow 0} \frac{J_h f - f}{h} = i(\log \mathcal{L}_V) f, \quad \text{in } L^2(\mathbb{R}^d),
    \]
    or equivalently that
    \[
    \left\| \frac{J_h f - f}{h} - i(\log \mathcal{L}_V) f \right\|_{L^2(\mathbb{R}^d)}^2 = 
    \int_0^\infty \left| \frac{\lambda^{ih}-1}{h} - i\log(\lambda)\right|^2 \, d\mu_{f,f}^V(\lambda) \rightarrow 0 
    \]
    as $h \rightarrow 0$. 

    We note that $\left| \frac{\lambda^{ih}-1}{h}\right| \leq |\log \lambda|$ for all $h \in \mathbb{R}$ and $\lambda > 0$. Then, 
    \[
    \left| \frac{\lambda^{ih}-1}{h} - i\log\lambda \right| \leq 2 |\log\lambda|. 
    \]

    Since $f \in D(\log \mathcal{L}_V)$, the Dominated Convergence Theorem leads to 
    \[
    \lim_{h \rightarrow 0} \int_0^\infty \left| \frac{\lambda^{ih}-1}{h} - i\log\lambda \right|^2 \, d\mu_{f,f}^V(\lambda) = 0. 
    \]
    We conclude that $f \in D(G)$ and $Gf = i(\log\mathcal{L}_V) f$.

    Conversely, suppose that $f \in D(G)$. Then there exists $g \in L^2(\mathbb{R}^d)$ such that $Gf = g$, that is,
    \[
    \lim_{h \rightarrow 0} \left\| \int_0^\infty \frac{\lambda^{hi}-1}{h} \, dE_V(\lambda) f - g \right\|_{L^2(\mathbb{R}^d)} = 0.
    \]

    Let $(h_k)_{k \in \mathbb{N}} \subseteq (0,\infty)$ be a sequence converging to zero. The sequence
    \[
    \left\{ \int_0^\infty \frac{\lambda^{h_k i} - 1}{h_k} \, dE_V(\lambda) f \right\}_{k \in \mathbb{N}}
    \]
    is Cauchy in $L^2(\mathbb{R}^d)$, or equivalently, the sequence $\displaystyle \left\{ \frac{\lambda^{h_k i}-1}{h_k}\right\}_{k \in \mathbb{N}}$ is Cauchy in $L^2((0,\infty), d\mu_{f,f}^V)$. Hence, there exists $h \in L^2((0,\infty), d\mu_{f,f}^V)$ such that
    \[
    \lim_{k \rightarrow \infty} \frac{\lambda^{h_k i}-1}{h_k} = h(\lambda) \quad \text{in } L^2((0,\infty), d\mu_{f,f}^V).
    \]
    On the other hand, for each $\lambda > 0$,
    \[
    \lim_{k \rightarrow \infty} \frac{\lambda^{h_k i}-1}{h_k} = i\log\lambda.
    \]
    Therefore, $h(\lambda) = i\log\lambda$ for $\mu_{f,f}^V$-almost every $\lambda > 0$.

    We have proved that 
    \[ 
    \left\| \int_0^\infty \frac{\lambda^{h_k i}-1}{h_k} \, dE_V(\lambda) f - \int_0^\infty i\log\lambda \, dE_V(\lambda) f \right\|_{L^2(\mathbb{R}^d)}^2 = 
    \int_0^\infty \left| \frac{\lambda^{h_k i}-1}{h_k} - i\log\lambda \right|^2 \, d\mu_{f,f}^V(\lambda) \rightarrow 0
    \]
    as $k \rightarrow \infty$. The arbitrariness of the sequence $(h_k)_{k \in \mathbb{N}}$ allows us to conclude that $f \in D(\log \mathcal{L}_V)$ and $g = i(\log \mathcal{L}_V) f$.
\end{proof}

By proceeding in a similar way we can prove the following property:

\begin{prop}\label{propo: 2.3}
    Suppose that $0 \notin \sigma(\mathcal{L}_V)$. Then $-\log \mathcal{L}_V$ is the infinitesimal generator of the $C_0$-semigroup $\{\mathcal{L}_V^{-t}\}_{t>0}$ of operators on $L^2(\mathbb{R}^d)$.
\end{prop}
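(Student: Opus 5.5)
Since $0\notin\sigma(\mathcal{L}_V)$, there is $a>0$ with $\sigma(\mathcal{L}_V)\subseteq[a,\infty)$, so $E_V$ and every measure $\mu^V_{f,f}$ are supported in $[a,\infty)$. I would mimic the proof of Proposition~\ref{propo: 2.2}, replacing $\lambda^{ih}$ by $\lambda^{-t}$ and $i\log\lambda$ by $-\log\lambda$, and working only with one-sided limits $t\to0^+$. Let $G$ be the infinitesimal generator of $\{\mathcal{L}_V^{-t}\}_{t>0}$; the semigroup property and strong continuity at $0$ have already been noted in the introduction.

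For the inclusion $-\log\mathcal{L}_V\subseteq G$, take $f\in D(\log\mathcal{L}_V)$ and write
\[
\Bigl\|\frac{\mathcal{L}_V^{-t}f-f}{t}+(\log\mathcal{L}_V)f\Bigr\|_{L^2(\mathbb{R}^d)}^2=\int_a^\infty\Bigl|\frac{\lambda^{-t}-1}{t}+\log\lambda\Bigr|^2\,d\mu^V_{f,f}(\lambda).
\]
The integrand tends to $0$ pointwise as $t\to0^+$. For the dominating function, use the mean value theorem: $\lambda^{-t}-1=-t\log\lambda\,\lambda^{-\xi}$ for some $\xi\in(0,t)$. Hence $|(\lambda^{-t}-1)/t|\le|\log\lambda|\,\max(1,a^{-1})$ for $t\in(0,1)$ and $\lambda\ge a$. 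This is the only point where $a>0$ enters, since for $\lambda<1$ the factor $\lambda^{-\xi}$ could blow up otherwise. The integrand is therefore bounded by $C|\log\lambda|^2$, which is $\mu^V_{f,f}$-integrable, and dominated convergence gives $f\in D(G)$ with $Gf=-(\log\mathcal{L}_V)f$.

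For the converse, take $f\in D(G)$ with $Gf=g$ and a sequence $t_k\to0^+$. The functions $(\lambda^{-t_k}-1)/t_k$ are Cauchy in $L^2((0,\infty),d\mu^V_{f,f})$, because the map $\phi\mapsto\int\phi\,dE_V f$ is an isometry from $L^2(d\mu^V_{f,f})$ into $L^2(\mathbb{R}^d)$. They therefore converge in that space to some $h$. Passing to an a.e.-convergent subsequence and comparing with the pointwise limit $-\log\lambda$ gives $h=-\log\lambda$ $\mu^V_{f,f}$-a.e. In particular $\int|\log\lambda|^2\,d\mu^V_{f,f}<\infty$, so $f\in D(\log\mathcal{L}_V)$, and the isometry gives $g=-(\log\mathcal{L}_V)f$.

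The one genuinely new point compared with Proposition~\ref{propo: 2.2} is the uniform bound on $(\lambda^{-t}-1)/t$, which needs the spectral gap. Everything else is routine.
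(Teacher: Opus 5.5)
Your proof is correct and follows the paper's route. The paper proves Proposition~\ref{propo: 2.3} only by remarking that one proceeds as in Proposition~\ref{propo: 2.2}, and you carry this out, using the spectral gap to get the domination $|(\lambda^{-t}-1)/t|\le \max(1,a^{-1})\,|\log\lambda|$ on $[a,\infty)$. In the converse direction, your explicit passage to a $\mu^V_{f,f}$-a.e.\ convergent subsequence also makes precise a step the paper leaves implicit.
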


We proceed now with the proof of Theorem~\ref{thm: 1.1}.

\begin{proof}[Proof of Theorem~\ref{thm: 1.1}]
We begin proving part~\ref{item: teo 1.1 - a}. Suppose that $f \in D(\mathcal{L}_V^{s_0}) \cap D(\log \mathcal{L}_V)$ for some $s_0 \in (0,1)$. Then $f \in D(\mathcal{L}_V^s)$ for every $0 < s < s_0$. We have that
\[
\frac{|\lambda^s-1|}{s} \leq C \begin{cases}
    |\log \lambda|, & \text{if } 0 < \lambda < 1, \\
    |\log \lambda| \lambda^s \leq C \lambda^{s_0}, & \text{if } 0 < s < s_0 \text{ and } \lambda \geq 1.
\end{cases}
\]

Then, 
\[ 
\left| \frac{\lambda^{s}-1}{s} - \log \lambda\right|^2 \leq C (\lambda^{2s_0} + |\log \lambda|^2)
\]
for $\lambda > 0$ and $0 < s < s_0$. Since $\int_0^\infty \lambda^{2s_0} \, d\mu_{f,f}^V(\lambda) < \infty$ and $\int_0^\infty |\log\lambda|^2 \, d\mu_{f,f}^V(\lambda) < \infty$, the Dominated Convergence Theorem allows us to obtain that
\[
\lim_{s \rightarrow 0^+} 
\left\|
\frac{\mathcal{L}_V^s f - f}{s} - (\log \mathcal{L}_V) f
\right\|_{L^2(\mathbb{R}^d)}^2 = 
\lim_{s \rightarrow 0^+}
\int_0^\infty \left| \frac{\lambda^{s}-1}{s} - \log \lambda\right|^2 \, d\mu_{f,f}^V(\lambda) = 0.
\]

Thus, we have proved that $f \in D(\LLog \mathcal{L}_V)$ and $(\LLog \mathcal{L}_V)f = (\log \mathcal{L}_V) f$.

If $f\in D(\LLog \mathcal{L}_V)$, by proceeding as in the proof of Proposition~\ref{propo: 2.2} we can see that $f\in D(\mathcal{L}_V^{s_0})\cap D(\log \mathcal{L}_V)$, for some $s_0\in (0,1)$, and $(\log \mathcal{L}_V) f = (\LLog \mathcal{L}_V) f$.

To prove part~\ref{item: teo 1.1 - b} we consider the function $F$ defined as follows:
\begin{equation*}
    \begin{aligned}
F: (0,\infty) & \rightarrow L^2(\mathbb{R}^d) \\
t & \mapsto \frac{e^{-t}I - T_t^V}{t} f, \text{ where } f\in L^2(\mathbb R^d). 
\end{aligned}
\end{equation*}

Since $L^2(\mathbb{R}^d)$ is a separable space, according to~\cite[p.~131]{Yo}, to show that $F$ is $L^2(\mathbb{R}^d)$-strongly measurable on $((0,\infty), dt)$ it suffices to prove that $F$ is $L^2(\mathbb{R}^d)$-weakly measurable.

Let $g\in L^2(\mathbb{R}^d)$. We define, for $t>0$,
\begin{equation*}
    H(t) = \int_{\RR^d} [F(t)](x) g(x) dx = \frac{1}{t} \left( e^{-t} \int_{\RR^d} f(x)g(x)dx - \int_{\RR^d} T_t^V(f)(x)g(x)dx\right).
\end{equation*}
Since $\{T_t^V\}_{t>0}$ is a $C_0$-semigroup of operators in $L^2(\mathbb{R}^d)$, the function $H$ is continuous in $(0,\infty)$. Hence, $H$ is a measurable function on $((0,\infty), dt)$. Thus, we proved that $F$ is weakly measurable in $((0,\infty),dt)$.

Let $m \in \mathbb{N}$. We have that 
\[
\int_{1/m}^m \|F(t)\|_{L^2(\mathbb{R}^d)} \, dt < \infty .
\]
Indeed, we can write
\begin{equation*}
    \begin{split}
        \int_{1/m}^m \|F(t)\|_{L^2(\mathbb{R}^d)} \, dt & = \int_{1/m}^m \left( \int_0^\infty \left| \frac{e^{-t} - e^{-t\lambda}}{t} \right|^2 \, d\mu_{f,f}^V(\lambda) \right)^{1/2} dt \\
        & \leq 2 \int_{1/m}^m \frac{1}{t} \left( \int_0^\infty d\mu_{f,f}^V(\lambda) \right)^{1/2} dt \\
        & \leq 4 \log(m) \|f\|_{L^2(\mathbb{R}^d)}.
    \end{split}
\end{equation*}
Hence, $F$ is $L^2(\mathbb{R}^d)$-Bochner integrable on $(1/m, m)$ for each $m\in\NN$.

Suppose now that $f \in D(\log \mathcal{L}_V)$. We are going to show that 
\begin{equation*}
    \lim_{m \rightarrow \infty} \int_{1/m}^m \frac{e^{-t}I - T_t^V}{t} f \, dt = (\log \mathcal{L}_V) f,
\end{equation*}
where the integrals are understood in the $L^2(\mathbb{R}^d)$-Bochner sense and the limit is taken in $L^2(\mathbb{R}^d)$.

Let $m \in \mathbb{N}$ and $g \in L^2(\mathbb{R}^d)$. It follows that
\begin{equation*}
    \begin{split}
        \int_{\mathbb{R}^d} |g(x)| & \int_{1/m}^m \left|
        \left( \int_0^\infty \frac{e^{-t} - e^{-t\lambda}}{t} \, dE_V(\lambda)\right) f(x) \right| dt \, dx \\
        & \leq \|g\|_{L^2(\mathbb{R}^d)} \int_{1/m}^m \left\| \left( \int_0^\infty \frac{e^{-t} - e^{-t\lambda}}{t} \, dE_V(\lambda)\right) f \right\|_{L^2(\mathbb{R}^d)} dt \\
        & \leq \|g\|_{L^2(\mathbb{R}^d)} \int_{1/m}^m 
        \left( \int_0^\infty \left|\frac{e^{-t} - e^{-t\lambda}}{t} \right|^2 \, d\mu_{f,f}^V(\lambda) \right)^{1/2} dt \\
        & \leq 4  \log (m)\|g\|_{L^2(\mathbb{R}^d)} \|f\|_{L^2(\mathbb{R}^d)} \, .
    \end{split}
\end{equation*}

Then, Fubini's theorem leads to 
\begin{equation*}
    \begin{split}
        \int_{\mathbb{R}^d} g(x) \int_{1/m}^m & \left( \int_0^\infty \frac{e^{-t} - e^{-t\lambda}}{t} \, dE_V(\lambda)\right) f(x) \, dt \, dx \\
        & = \int_{1/m}^m \int_{\mathbb{R}^d} g(x) \left( \int_0^\infty \frac{e^{-t} - e^{-t\lambda}}{t} \, dE_V(\lambda)\right) f(x) \, dx \, dt \\
        & = \int_{1/m}^m \int_0^\infty \frac{e^{-t} - e^{-t\lambda}}{t} \, d\mu_{f,g}^V(\lambda) \, dt \\
        & = \int_0^\infty \int_{1/m}^m \frac{e^{-t} - e^{-t\lambda}}{t} \, dt \, d\mu_{f,g}^V(\lambda) \\
        & = \int_{\mathbb{R}^d} g(x) \left( \int_0^\infty \int_{1/m}^m \frac{e^{-t} - e^{-t\lambda}}{t} \, dt \, dE_V(\lambda) \right) f(x) \, dx.
    \end{split}
\end{equation*}

The arbitrariness of $g$ leads to 
\begin{equation*}
    \int_{1/m}^m \left( \int_0^\infty \frac{e^{-t} - e^{-t\lambda}}{t} \, dE_V(\lambda) \right) f \, dt = \int_0^\infty \int_{1/m}^m \frac{e^{-t} - e^{-t\lambda}}{t} \, dt \, dE_V(\lambda) f.
\end{equation*}

Our objective is to show that 
\begin{equation*}
    \lim_{m \rightarrow \infty} \int_0^\infty \int_{1/m}^m \frac{e^{-t} - e^{-t\lambda}}{t} \, dt \, dE_V(\lambda) f = (\log \mathcal{L}_V) f,
\end{equation*}
where the limit is taken in $L^2(\mathbb{R}^d)$.

We have that 
\begin{equation*}
    \begin{split}
        \left\| \int_0^\infty \int_m^\infty \frac{e^{-t} - e^{-t\lambda}}{t} \, dt \, dE_V(\lambda) f \right\|_{L^2(\mathbb{R}^d)}^2 
        & = \int_0^\infty \left( \int_m^\infty \left| \frac{e^{-t} - e^{-t\lambda}}{t} \right| \, dt \right)^2 d\mu_{f,f}^V(\lambda) \\
        & \leq \int_0^1 \left( \int_m^\infty \left| \frac{e^{-t} - e^{-t\lambda}}{t} \right| \, dt \right)^2 d\mu_{f,f}^V(\lambda) \\
        & \quad + \int_1^\infty \left( \int_m^\infty \left| \frac{e^{-t} - e^{-t\lambda}}{t} \right| \, dt \right)^2 d\mu_{f,f}^V(\lambda) \\
        & =: J_1(m) + J_2(m), \quad m \in \mathbb{N}.
    \end{split}
\end{equation*}

We get, as in the proof of \cite[Theorem~2.12]{Ch-manifolds},
\begin{equation*}
    \int_m^\infty \left| \frac{e^{-t} - e^{-t\lambda}}{t} \right| dt = \int_m^\infty \frac{e^{-t\lambda} - e^{-t}}{t} \, dt \leq \int_1^\infty \frac{e^{-t\lambda} - e^{-t}}{t} \, dt \leq C(|\log\lambda| + 1),
\end{equation*}
for $0 < \lambda < 1$ and $m \in \mathbb{N}$.

Since $f \in D(\log \mathcal{L}_V)$, we have $\displaystyle \int_0^1 (|\log\lambda| + 1)^2 \, d\mu_{f,f}^V(\lambda) < \infty$, and the Dominated Convergence Theorem leads to
\[
\lim_{m \rightarrow \infty} J_1(m) = 0.
\]

On the other hand, we have that
\[
\int_m^\infty \left| \frac{e^{-t} - e^{-t\lambda}}{t} \right| \, dt \leq 2 \int_m^\infty \frac{e^{-t}}{t} \, dt \leq 2 \int_1^\infty \frac{e^{-t}}{t} \, dt, \quad \lambda > 1 \text{ and } m \in \mathbb{N}.
\]
Then, since $\mu_{f,f}^V$ is a bounded measure and $\int_1^\infty \frac{e^{-t}}{t} \, dt < \infty$, we obtain
\[
\lim_{m \rightarrow \infty} J_2(m) = 0.
\]

We conclude that 
\[ 
\lim_{m \rightarrow \infty} \left( J_1(m) + J_2(m) \right) = 0.
\]

Again, as in the proof of \cite[Theorem~2.12]{Ch-manifolds} we obtain
\[
\int_0^{1/m} \left| \frac{e^{-t} - e^{-t\lambda}}{t} \right| \, dt \leq \int_0^1 \left| \frac{e^{-t} - e^{-t\lambda}}{t} \right| \, dt \leq C(|\log \lambda| + 1), \quad \lambda > 0 \text{ and } m \in \mathbb{N}. 
\]
The Dominated Convergence Theorem leads to 
\[ 
\lim_{m \rightarrow \infty} \left\| \int_0^\infty \int_0^{1/m} \frac{e^{-t} - e^{-t\lambda}}{t} \, dt \, dE_V(\lambda) f \right\|_{L^2(\mathbb{R}^d)} = 0.
\]

We conclude that $f \in D(\Log \mathcal{L}_V)$ and 
\[ 
\lim_{m \rightarrow \infty} \int_0^\infty \int_{1/m}^m \frac{e^{-t} - e^{-t\lambda}}{t} \, dt \, dE_V(\lambda) f = \int_0^\infty \log(\lambda) \, dE_V(\lambda) f = (\log \mathcal{L}_V) f,
\]
in $L^2(\mathbb{R}^d)$, which completes the proof.

\end{proof}

We define, for every $\alpha \in (0,1)$, the operator 
\[ 
\mathcal{L}_{V,\textup{heat}}^\alpha f = \frac{1}{\Gamma(-\alpha)} \lim_{m \rightarrow \infty} \int_{1/m}^m \frac{T_t^V f - f}{t^{1+\alpha}} \, dt, \quad f \in D(\mathcal{L}_{V,\textup{heat}}^{\alpha}), 
\]
where the integrals are understood in the $L^2(\mathbb{R}^d)$-Bochner sense and the limit is taken in $L^2(\mathbb{R}^d)$. Here
\begin{equation*}
\begin{split}
    D(\mathcal{L}_{V,\textup{heat}}^{\alpha}) = & \left\{ f \in L^2(\mathbb{R}^d) : \frac{T_t^V f - f}{t^{1+\alpha}} \text{ is } L^2(\mathbb{R}^d)\text{-Bochner integrable on } ((1/m,m), dt),\right. \\
    & \left. \text{ for every } m \in \mathbb{N}, \text{ and } \lim_{m \rightarrow \infty} \int_{1/m}^m \frac{T_t^V f - f}{t^{1+\alpha}} \, dt \text{ exists in } L^2(\mathbb{R}^d) \right\}.
\end{split}
\end{equation*}

\begin{prop} \label{propo: 2.4}
    Let $f \in L^2(\mathbb{R}^d)$ and $\alpha \in (0,1)$.
    \begin{enumerate}
        \item\label{item: propo 2.4 - a} If $f \in D(\mathcal{L}_V^\alpha)$, then $f \in D(\mathcal{L}_{V,\textup{heat}}^{\alpha})$ and $\mathcal{L}_{V,\textup{heat}}^{\alpha} f = \mathcal{L}_V^\alpha f$.
        \item\label{item: propo 2.4 - b} If $f \in D(\mathcal{L}_V^\beta)$ for some $\alpha < \beta \leq 1$, then
        \[
        \mathcal{L}_V^\alpha f = \frac{1}{\Gamma(-\alpha)} \int_0^\infty \frac{T_t^V f - f}{t^{1+\alpha}} \, dt,
        \]
        where the integral converges in the $L^2(\mathbb{R}^d)$-Bochner sense.
    \end{enumerate}
\end{prop}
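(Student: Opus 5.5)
The plan is to mirror the proof of Theorem~\ref{thm: 1.1}\ref{item: teo 1.1 - b}, with the Frullani formula replaced by the scalar identity
\[
\lambda^\alpha=\frac{1}{\Gamma(-\alpha)}\int_0^\infty \frac{e^{-t\lambda}-1}{t^{1+\alpha}}\,dt,\qquad \lambda>0,\ \alpha\in(0,1).
\]
This identity follows from the substitution $u=t\lambda$ and the formula $\int_0^\infty (e^{-u}-1)u^{-1-\alpha}\,du=\Gamma(-\alpha)$, obtained by integration by parts.

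\textbf{Part (a): Bochner integrability on $(1/m,m)$.} Put $F(t)=t^{-1-\alpha}(T_t^V f-f)$. Weak measurability follows from the continuity of $t\mapsto\langle T_t^Vf,g\rangle$, and strong measurability then follows from separability of $L^2(\mathbb{R}^d)$, exactly as before. Since $\|T_t^Vf-f\|_{L^2(\mathbb{R}^d)}\le 2\|f\|_{L^2(\mathbb{R}^d)}$, the norm $\|F(t)\|_{L^2(\mathbb{R}^d)}$ is integrable on $(1/m,m)$. Hence $F$ is Bochner integrable there.

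\textbf{Part (a): passing to the spectral side.} Pairing with $g\in L^2(\mathbb{R}^d)$ and applying Fubini, as in the proof of Theorem~\ref{thm: 1.1}, gives
\[
\int_{1/m}^m F(t)\,dt=\int_0^\infty\Big(\int_{1/m}^m\frac{e^{-t\lambda}-1}{t^{1+\alpha}}\,dt\Big)\,dE_V(\lambda)f .
\]
Fubini is justified because $|e^{-t\lambda}-1|\le 2$ and $\mu_{f,g}^V$ is a finite measure. Convergence in $L^2(\mathbb{R}^d)$ to $\Gamma(-\alpha)\mathcal{L}_V^\alpha f$ now reduces to showing that
\[
\int_0^\infty\Big|\int_{(0,1/m)\cup(m,\infty)}\frac{e^{-t\lambda}-1}{t^{1+\alpha}}\,dt\Big|^2\,d\mu_{f,f}^V(\lambda)\longrightarrow 0 .
\]
The integrand tends to $0$ pointwise in $\lambda$. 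For domination, bound the inner integral by $\int_0^\infty |e^{-t\lambda}-1|\,t^{-1-\alpha}\,dt = C_\alpha\lambda^\alpha$. Since $f\in D(\mathcal{L}_V^\alpha)$, the function $\lambda^{2\alpha}$ is $\mu_{f,f}^V$-integrable, so dominated convergence gives the claim. This proves $f\in D(\mathcal{L}_{V,\textup{heat}}^\alpha)$ and $\mathcal{L}_{V,\textup{heat}}^\alpha f=\mathcal{L}_V^\alpha f$.

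\textbf{Part (b): the integral over all of $(0,\infty)$.} The point is that $\int_0^\infty\|F(t)\|_{L^2(\mathbb{R}^d)}\,dt<\infty$, so there is no need to take a limit in $m$. For large $t$ use $\|F(t)\|\le 2\|f\|\,t^{-1-\alpha}$, which is integrable on $(1,\infty)$. The obstacle is near $t=0$. There I write
\[
\|F(t)\|^2=t^{-2-2\alpha}\int_0^\infty|1-e^{-t\lambda}|^2\,d\mu_{f,f}^V(\lambda)
\]
and use $|1-e^{-t\lambda}|\le \min(1,t\lambda)\le (t\lambda)^\beta$, valid since $\beta\le1$. This gives
\[
\|F(t)\|_{L^2(\mathbb{R}^d)}\le t^{\beta-1-\alpha}\,\|\mathcal{L}_V^\beta f\|_{L^2(\mathbb{R}^d)},
\]
which is integrable on $(0,1)$ precisely because $\beta>\alpha$. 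Thus $F$ is Bochner integrable on $(0,\infty)$. Its integral is the $L^2(\mathbb{R}^d)$-limit of $\int_{1/m}^m F(t)\,dt$, and that limit equals $\Gamma(-\alpha)\mathcal{L}_V^\alpha f$ by part (a), since $D(\mathcal{L}_V^\beta)\subseteq D(\mathcal{L}_V^\alpha)$.

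The only delicate point is this small-$t$ norm estimate. It is where the hypothesis $\beta>\alpha$ enters, and it is what upgrades the principal-value type convergence of part (a) to absolute Bochner convergence.
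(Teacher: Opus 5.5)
Your proposal is correct and follows the paper's proof: part (a) repeats the argument of Theorem~\ref{thm: 1.1}\ref{item: teo 1.1 - b} (Bochner integrability on $(1/m,m)$, Fubini against $\mu^V_{f,g}$, dominated convergence on the spectral side, where your single bound $C_\alpha\lambda^\alpha$ for both tails is a tidy substitute for the case analysis used there). Part (b) rests on exactly the paper's estimates, $\|F(t)\|_{L^2(\mathbb{R}^d)}\le t^{\beta-1-\alpha}\|\mathcal{L}_V^\beta f\|_{L^2(\mathbb{R}^d)}$ for $0<t<1$ and $\|F(t)\|_{L^2(\mathbb{R}^d)}\le 2t^{-1-\alpha}\|f\|_{L^2(\mathbb{R}^d)}$ for $t>1$, which give Bochner integrability on all of $(0,\infty)$; your $F(t)=t^{-1-\alpha}(T_t^Vf-f)$ is also the correct integrand, whereas the paper's displayed $F_\alpha$ with $e^{-t}I-T_t^V$ is a typo carried over from Theorem~\ref{thm: 1.1}.
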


\begin{proof}
    We note first that part~\ref{item: propo 2.4 - a} can be proved proceeding as in the proof of part~\ref{item: teo 1.1 - b} of Theorem~\ref{thm: 1.1}. To prove part~\ref{item: propo 2.4 - b} of Proposition~\ref{propo: 2.4}, we consider the function $F_\alpha$ defined as follows:
\begin{equation*}
    \begin{aligned}
F_\alpha: (0,\infty) & \rightarrow L^2(\mathbb{R}^d) \\
t & \mapsto \frac{e^{-t}I - T_t^V}{t^{1+\alpha}} f, \text{ where } f\in L^2(\mathbb R^d). 
\end{aligned}
\end{equation*}

    Let $f \in D(\mathcal{L}_V^\beta)$ for some $\alpha < \beta \leq 1$. We have that
    \begin{equation*}
        \begin{split}
            \int_0^\infty \|F_\alpha(t)\|_{L^2(\mathbb{R}^d)} \, dt & = \int_0^\infty \left( \int_0^\infty \left| \frac{e^{-t\lambda} - 1}{t^{1+\alpha}} \right|^2 \, d\mu_{f,f}^V(\lambda) \right)^{1/2} dt \\
            & \leq C \left( \int_0^1 \frac{dt}{t^{1+\alpha-\beta}} \left( \int_0^\infty \lambda^{2\beta} \, d\mu_{f,f}^V(\lambda) \right)^{1/2} + \int_1^\infty \frac{dt}{t^{1+\alpha}} \left(\int_0^\infty d\mu_{f,f}^V(\lambda)\right)^{1/2} \right) < \infty,
        \end{split}
    \end{equation*}
    because $\mu_{f,f}^V$ is a bounded measure and $f \in D(\mathcal{L}_V^\beta)$. 

    We conclude that $F_\alpha$ is $L^2(\mathbb{R}^d)$-Bochner integrable on $((0,\infty), dt)$. It follows that $f \in D(\mathcal{L}_{V,\textup{heat}}^\alpha)$ and that 
    \[
    \mathcal{L}_V^\alpha f = \mathcal{L}_{V,\textup{heat}}^\alpha f = \frac{1}{\Gamma(-\alpha)} \int_0^\infty \frac{T_t^V f - f}{t^{1+\alpha}} \, dt,
    \]
    which completes the proof.
\end{proof}

We define, for every $\alpha \in (0,1)$, the operator 
\[ 
\mathcal{L}_{V,\textup{heat}}^{-\alpha} f = \frac{1}{\Gamma(\alpha)} \lim_{m \rightarrow \infty} \int_{1/m}^m \frac{T_t^V f}{t^{1-\alpha}} \, dt, \quad f \in D(\mathcal{L}_{V,\textup{heat}}^{-\alpha}), 
\]
where the integrals are understood in the $L^2(\mathbb{R}^d)$-Bochner sense and the limit is taken in $L^2(\mathbb{R}^d)$. Here
\begin{equation*}
\begin{split}
    D(\mathcal{L}_{V,\textup{heat}}^{-\alpha}) = & \left\{ f \in L^2(\mathbb{R}^d) : \frac{T_t^V f}{t^{1-\alpha}} \text{ is } L^2(\mathbb{R}^d)\text{-Bochner integrable on } ((1/m,m), dt),\right. \\
    & \left. \text{ for every } m \in \mathbb{N}, \text{ and } \lim_{m \rightarrow \infty} \int_{1/m}^m \frac{T_t^V f}{t^{1-\alpha}} \, dt \text{ exists in } L^2(\mathbb{R}^d) \right\}.
\end{split}
\end{equation*}

By arguing as in Proposition~\ref{propo: 2.4} we can obtain the following result. 

\begin{prop}\label{propo: 2.5}
      Let $f \in L^2(\mathbb{R}^d)$ and $\alpha \in (0,1)$.
    \begin{enumerate}
        \item\label{item: propo 2.5 - a} If $f \in D(\mathcal{L}_V^{-\alpha})$, then $f \in D(\mathcal{L}_{V,\textup{heat}}^{-\alpha})$ and $\mathcal{L}_{V,\textup{heat}}^{-\alpha} f = \mathcal{L}_V^{-\alpha} f$.
        \item\label{item: propo 2.5 - b} If $f \in D(\mathcal{L}_V^{-\beta})$ for some $\alpha < \beta < 1$, then
        \[
        \mathcal{L}_V^{-\alpha} f = \frac{1}{\Gamma(\alpha)} \int_0^\infty \frac{T_t^V f}{t^{1-\alpha}} \, dt,
        \]
        where the integral converges in the $L^2(\mathbb{R}^d)$-Bochner sense.
    \end{enumerate}
\end{prop}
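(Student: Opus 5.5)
We prove Proposition~\ref{propo: 2.5} by the same spectral method used for Proposition~\ref{propo: 2.4} and part~\ref{item: teo 1.1 - b} of Theorem~\ref{thm: 1.1}. The starting point is the scalar Gamma integral
\[
\lambda^{-\alpha} = \frac{1}{\Gamma(\alpha)} \int_0^\infty e^{-t\lambda} t^{\alpha-1}\, dt, \quad \lambda>0 .
\]
Fix $f \in L^2(\mathbb{R}^d)$ and set $G_\alpha(t) = t^{\alpha-1} T_t^V f$. Then $t\mapsto \langle G_\alpha(t), g\rangle$ is continuous on $(0,\infty)$ because $\{T_t^V\}_{t>0}$ is a $C_0$-semigroup. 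Since $L^2(\mathbb{R}^d)$ is separable, Pettis' theorem (\cite[p.~131]{Yo}) gives that $G_\alpha$ is strongly measurable. Moreover $\|G_\alpha(t)\|_{L^2}\le t^{\alpha-1}\|f\|_{L^2}$, so $G_\alpha$ is Bochner integrable on each interval $(1/m,m)$.

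For part~\ref{item: propo 2.5 - a}, the first step is to identify the truncated integral spectrally. Pairing with $g\in L^2(\mathbb{R}^d)$ and applying Fubini, exactly as in the proof of Theorem~\ref{thm: 1.1}, gives
\[
\int_{1/m}^m t^{\alpha-1}T_t^V f\,dt=\int_0^\infty \phi_m(\lambda)\,dE_V(\lambda)f, \qquad \phi_m(\lambda)=\int_{1/m}^m e^{-t\lambda}t^{\alpha-1}dt .
\]
Fubini applies because the integrand is bounded by $t^{\alpha-1}$ and $|\mu^V_{f,g}|$ is a finite measure. The second step uses $0\le \phi_m(\lambda)\le \Gamma(\alpha)\lambda^{-\alpha}$ and $\phi_m(\lambda)\to\Gamma(\alpha)\lambda^{-\alpha}$ pointwise, which together give
\[
\Big\|\int_{1/m}^m t^{\alpha-1}T_t^Vf\,dt-\Gamma(\alpha)\mathcal{L}_V^{-\alpha}f\Big\|_{L^2}^2=\int_0^\infty|\phi_m(\lambda)-\Gamma(\alpha)\lambda^{-\alpha}|^2\,d\mu^V_{f,f}(\lambda) .
\]
The integrand on the right is bounded by $4\Gamma(\alpha)^2\lambda^{-2\alpha}$. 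This bound is $\mu^V_{f,f}$-integrable precisely because $f\in D(\mathcal{L}_V^{-\alpha})$, so dominated convergence shows the right-hand side tends to $0$. Hence $f\in D(\mathcal{L}^{-\alpha}_{V,\textup{heat}})$ and $\mathcal{L}^{-\alpha}_{V,\textup{heat}}f=\mathcal{L}_V^{-\alpha}f$.

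For part~\ref{item: propo 2.5 - b}, the aim is to show that $G_\alpha$ is Bochner integrable on all of $(0,\infty)$. By the spectral theorem,
\[
\|G_\alpha(t)\|_{L^2}=t^{\alpha-1}\Big(\int_0^\infty e^{-2t\lambda}d\mu^V_{f,f}(\lambda)\Big)^{1/2}.
\]
Near $t=0$ we use the trivial bound $\|G_\alpha(t)\|_{L^2}\le t^{\alpha-1}\|f\|_{L^2}$, which is integrable on $(0,1)$ since $\alpha>0$. For $t\ge1$ we use $e^{-2t\lambda}\le C_\beta (t\lambda)^{-2\beta}$, which yields
\[
\|G_\alpha(t)\|_{L^2}\le C\,t^{\alpha-1-\beta}\Big(\int_0^\infty\lambda^{-2\beta}\,d\mu^V_{f,f}(\lambda)\Big)^{1/2}.
\]
This is integrable on $(1,\infty)$ because $\beta>\alpha$ and $f\in D(\mathcal{L}_V^{-\beta})$. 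The main obstacle is exactly this tail estimate: it is the only place where the hypothesis $\beta>\alpha$ enters, since membership in $D(\mathcal{L}_V^{-\alpha})$ alone gives only conditional convergence. Once absolute Bochner integrability on $(0,\infty)$ is established, the Bochner integral over $(0,\infty)$ equals the limit of the truncated integrals $\int_{1/m}^m$. It remains to check that $f\in D(\mathcal{L}_V^{-\alpha})$, so that part~\ref{item: propo 2.5 - a} applies. This follows from $\lambda^{-2\alpha}\le \lambda^{-2\beta}+1$ together with the finiteness of $\mu^V_{f,f}$. Part~\ref{item: propo 2.5 - a} then identifies the limit as $\Gamma(\alpha)\mathcal{L}_V^{-\alpha}f$, which proves the stated formula.
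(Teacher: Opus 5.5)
Your proposal is correct and takes essentially the same route as the paper, which proves Proposition~\ref{propo: 2.5} by arguing as in Proposition~\ref{propo: 2.4}: part (a) identifies the truncated integrals spectrally via Fubini and passes to the limit by dominated convergence, as in part (b) of Theorem~\ref{thm: 1.1}, and part (b) bounds $\int_0^\infty \|t^{\alpha-1}T_t^V f\|_{L^2}\,dt$ through the spectral measure with a split at $t=1$. As you note, the hypothesis $f\in D(\mathcal{L}_V^{-\beta})$ with $\beta>\alpha$ is what controls the tail at infinity, playing the role that $f\in D(\mathcal{L}_V^{\beta})$ plays near $t=0$ in Proposition~\ref{propo: 2.4}.
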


\begin{rem}
    Note that the properties established in this section can also be proved for a more general class of operators. As far as we know, these properties have not been studied carefully enough in the literature.
\end{rem}

\section{Pointwise representation of \texorpdfstring{$\log \mathcal{L}_V$}{log LV}}\label{sec: 3}

In this section we are going to prove Theorem~\ref{thm: 1.2}. 

Let $f\in C_c^\infty (\RR^d)$. According to Theorem~\ref{thm: 1.1} we have that
\[(\log \mathcal{L}_V)f=\lim_{s\to 0^+}\frac{\mathcal{L}_V^s f-f}{s}, \quad \text{in }L^2(\RR^d).\]
Then, for a certain sequence $\{s_n\}_{n\in \NN}\subseteq (0,\infty)$ such that $\lim_{n\to \infty}s_n=0$, we can write
\begin{equation}\label{eq: log LVsf ctp}
    (\log \mathcal{L}_V)f(x)=\lim_{n\to\infty}\frac{\mathcal{L}_V^{s_n} f(x)-f(x)}{s_n}, \quad \text{a.e. }x\in \RR^d.
\end{equation}
Since $f\in D(\mathcal{L}_V)$, it follows that 
\[\mathcal{L}_V^s f=\frac{1}{\Gamma(-s)}\int_0^\infty \frac{T_t^V(f)-f}{t^{s+1}}dt, \quad 0<s<1,\]
where the integral is understood in the $L^2(\RR^d)$-Bochner sense.

Indeed, let $g\in L^2(\RR^d)$. From \eqref{eq: log LVsf ctp} and the properties of the Bochner integrals, we can write
\begin{align*}
    \int_{\RR^d} g(x) \left(\int_0^\infty \frac{T_t^V(f)-f}{t^{s+1}}dt\right)(x) dx&=\int_0^\infty \int_{\RR^d} \frac{T_t^V(f)(x)-f(x)}{t^{s+1}}g(x) dx dt\\
    &=\int_0^1 \frac{T_t^V(f)(x)-f(x)}{t^{s+1}}g(x) dx dt+\int_1^\infty\frac{T_t^V(f)(x)-f(x)}{t^{s+1}}g(x) dx dt.
\end{align*}
Since $\sup_{t>0}\|T_t^V\|_{L^2(\RR^d)\to L^2(\RR^d)}<\infty$, we obtain
\begin{align*}
    \int_1^\infty\frac{\left|T_t^V(f)(x)-f(x)\right|}{t^{s+1}}&|g(x)| dx dt\\
    &\leq  \left(\sup_{t>0}\|T_t^Vf\|_{L^2(\RR^d)}+\|f\|_{L^2(\RR^d)}\right)\|g\|_{L^2(\RR^d)} \int_1^\infty \frac{dt}{t^{s+1}}\\
    &\leq \frac{C}{s} \|f\|_{L^2(\RR^d)}\|g\|_{L^2(\RR^d)}<\infty.
\end{align*}

On the other hand, since $f\in D(\mathcal L_V)$, there exists $C>0$ such that 
\[\|T_t^V(f)-f\|_{L^2(\RR^d)}\leq Ct, \quad t\in (0,1).\]
Then,
\[\int_0^1\int_{\mathbb R^d} \frac{|T_t^V(f)(x)-f(x)|}{t^{s+1}}|g(x)|dx dt\leq C \|g\|_{L^2(\RR^d)}\int_0^1 \frac{1}{t^s} dt <\infty.\]
Fubini's theorem leads to
\[\int_{\RR^d} \left(\int_0^\infty \frac{T_t^V(f)-f}{t^{s+1}}dt\right)(x) dx=\int_{\RR^d} g(x) \int_0^\infty \frac{T_t^V(f)(x)-f(x)}{t^{s+1}}dt dx.\]
The arbitrariness of $g$ implies that
\[\left(\int_0^\infty \frac{T_t^V(f)-f}{t^{s+1}}dt\right)(x)=\int_0^\infty \frac{T_t^V(f)(x)-f(x)}{t^{s+1}}dt, \quad \text{a.e. }x\in \RR^d.\]

We are going to see that, for every $x\in \RR^d$ and $r>0$,
\begin{equation}\label{eq: 2.1}
    \lim_{s\to 0^+} H_s(x)=\int_{B(x,r)} (f(x)-f(y))\int_0^\infty \frac{T_t^V(x,y)}{t}dtdy-\int_{\RR^d\setminus B(x,r)}f(y)\int_0^\infty \frac{T_t^V(x,y)}{t}dtdy-K(x,r)f(x),
\end{equation}
where
\begin{align*}
    K(x,r)&=2\log(\rho(x))+\int_0^{\rho^2(x)} \int_{\RR^d} \frac{T_t^V(x,y)-T_t(x-y)}{t}dydt-\int_0^{\rho^2(x)} \int_{\RR^d\setminus B(x,r)} \frac{T_t^V(x,y)}{t}dydt\\
    &\quad +\int_{\rho^2(x)}^\infty \int_{B(x,r)} \frac{T_t^V(x,y)}{t}dydt-\gamma,
\end{align*}
and 
\[H_s(x)=\frac1s \left(\frac{1}{\Gamma(-s)}\int_0^\infty \frac{T_t^V(f)(x)-f(x)}{t^{s+1}}dt-f(x)\right), \quad s\in (0,\tfrac12).\]

Fix $x\in \RR^d$. We can write
\begin{align*}
    \int_0^\infty \frac{T_t^V(f)(x)-f(x)}{t^{s+1}}dt&=\int_0^\infty \frac{T_t^V(f)(x)-T_ t^V(1)(x)f(x)}{t^{s+1}}dt+f(x)\int_0^\infty \frac{T_t^V(1)(x)-1}{t^{s+1}}dt\\
    &:=J_1(s)+J_2(s), \quad s\in (0,\tfrac12).
\end{align*}

According to \eqref{eq: 1.2} we have that
\begin{align*}
    \int_0^\infty \int_{\RR^d} \frac{T_t^V(x,y)}{t^{s+1}} |f(x)-f(y)| dy dt &\leq C \int_{\RR^d} |f(x)-f(y)|\int_0^\infty \frac{e^{-c|x-y|^2/t}}{t^{\frac d2 +s+1}}dt dy\\
    &\leq C\int_{\RR^d} \frac{|f(x)-f(y)|}{|x-y|^{d+2s}}dy\\
    &\leq C \left(\int_{B(x,1)} \frac{dy}{|x-y|^{d+2s-1}}+\int_{B^c(x,1)}\frac{dy}{|x-y|^{d+2s}}\right)\\
    &\leq C\left(\int_0^1 \frac{du}{u^{2s}}+\int_1^\infty \frac{du}{u^{2s+1}}\right)<\infty, \quad s\in (0,\tfrac12).
\end{align*}

Let $r>0$. It follows that
\begin{align*}
    J_1(s)&=\int_{\RR^d} (f(y)-f(x))\int_0^\infty \frac{T_t^V(x,y)}{t^{s+1}}dtdy\\
    &=\int_{B(x,r)} (f(y)-f(x))\int_0^\infty \frac{T_t^V(x,y)}{t^{s+1}}dtdy+\int_{\RR^d\setminus B(x,r)} f(y)\int_0^\infty \frac{T_t^V(x,y)}{t^{s+1}}dtdy\\
    &\quad -f(x)\int_{\RR^d\setminus B(x,r)}\int_0^\infty \frac{T_t^V(x,y)}{t^{s+1}}dtdy\\
    &:=J_{1,1}(s)+J_{1,2}(s)+J_{1,3}(s), \quad s\in (0,\tfrac12).
\end{align*}

Since $s\Gamma(-s)=-\Gamma(1-s)$, for $s\in(0,1)$, $\Gamma(-s)\sim -\frac1s$ as $s\to 0^+$. We will use this property several times in the sequel.

By proceeding as above, we get
\[\int_0^\infty \frac{T_t^V(x,y)}{t^{s+1}}dt\leq C \int_0^\infty \frac{e^{-c|x-y|^2/t}}{t^{\frac d2 +s+1}}dt \leq \frac{C}{|x-y|^{d+2s}}, \quad y\neq x.\]

If $|x-y|<r$, then $|x-y|^{d+2s}=r^{d+2s}\left|\frac{x-y}{r}\right|^{d+2s}\geq C|x-y|^{d+\frac12}$ for $s\in (0,1/4)$. 
Thus, since $f\in C_c^\infty(\RR^d)$, we obtain
\[|f(x)-f(y)|\int_0^\infty\frac{T_t^V(x,y)}{t^{s+1}}dt\leq \frac{C}{|x-y|^{d-\frac12}}, \quad 0<|x-y|<r.\]

Since $\int_{B(x,r)} |x-y|^{-d+\frac12}dy<\infty$, the Dominated Convergence Theorem implies that
\[\lim_{s\to 0^+} J_{1,1}(s)=\int_{B(x,r)} (f(y)-f(x)) \lim_{s\to 0^+} \int_0^\infty \frac{T_t^V(x,y)}{t^{s+1}}dt dy.\]
On the other hand, using again \eqref{eq: 1.2}, we get for every $x\neq y$ and $s\in (0,1)$ that
\[\frac{T_t^V(x,y)}{t^{s+1}}\leq C\begin{dcases*} \frac{e^{-c|x-y|^2/t}}{t^{\frac d2 +\frac32}} & $t\in (0,1)$\\ 
\frac{1}{t^{\frac d2 +1}} & $t\in (1,\infty).$
\end{dcases*}\]
Therefore, the Dominated Convergence Theorem leads to
\[\lim_{s\to 0^+} \int_0^\infty \frac{T_t^V(x,y)}{t^{s+1}}dt=\int_0^\infty \frac{T_t^V(x,y)}{t}dt,\]
so we conclude that
\[\lim_{s\to 0^+} \frac{1}{\Gamma(-s)s} J_{1,1}(s)=-\int_{B(x,r)} (f(y)-f(x))\int_0^\infty \frac{T_t^V(x,y)}{t}dt.\]

Since $\supp(f)$ is compact, by proceeding as before we obtain that
\[\lim_{s\to 0^+} \frac{1}{\Gamma(-s)s} J_{1,2}(s)=-\int_{\RR^d\setminus B(x,r)} f(y)\int_0^\infty \frac{T_t^V(x,y)}{t}dt.\]

We postpone, for the moment, the study of $J_{1,3}$ and proceed to estimate $J_2$.

We decompose $J_2$ as follows,
\begin{align*}
    J_2(s)&=f(x)\int_0^\infty \frac{T_t^V(1)(x)-T_t(1)(x)}{t^{s+1}}dt\\
    &=f(x)\int_0^{\rho^2(x)} \int_{\RR^d} \frac{T_t^V(x,y)-T_t(x-y)}{t^{s+1}}dydt +f(x)\int_{\rho^2(x)}^\infty \int_{\RR^d\setminus B(x,r)} \frac{T_t^V(x,y)}{t^{s+1}}dydt\\
    &\quad +{f(x)}\int_{\rho^2(x)}^\infty \int_{B(x,r)} \frac{T_t^V(x,y)}{t^{s+1}}dydt-f(x)\int_{\rho^2(x)}^\infty \int_{\RR^d} \frac{T_t(x-y)}{t^{s+1}}dydt\\
    &:=J_{2,1}(s)+J_{2,2}(s)+J_{2,3}(s)+J_{2,4}(s), 
\end{align*}
for $s\in \left(0,\frac{\delta}{4}\right)$, where $\delta>0$ is as in \eqref{eq: 1.5}.

Clearly, 
\[J_{2,4}(s)=-\frac{\rho(x)^{-2s}}{s}f(x), \quad s\in \left(0,\tfrac12\right).\]
Then,
\begin{align*}
    \lim_{s\to 0^+} \frac1s\left(\frac{1}{\Gamma(-s)}J_{2,4}(s)-f(x)\right)&=f(x)\lim_{s\to 0^+} \left(\frac{\rho(x)^{-2s}-1}{s}+\frac{1-\Gamma(1-s)}{s\Gamma(1-s)}\rho(x)^{-2s}\right)\\
    &=f(x)\left(-2\log(\rho(x))+\Gamma'(1)\right)=f(x)\left(-2\log(\rho(x))-\gamma\right).
\end{align*}

To estimate $J_{2,1}$, we consider $0<s<\frac{\delta}{4}$. According to \eqref{eq: 1.5}, for any $0<t<\rho^2(x)$ and $y\in \RR^d$, we can obtain
\begin{align*}
    \frac{|T_t^V(x,y)-T_t(x-y)|}{t^{s+1}}&\leq  \left(\frac{\sqrt{t}}{\rho(x)}\right)^\delta \frac{\omega_t(x-y)}{t^{s+1}} =  \frac{\omega_t(x-y)}{\left(\frac{t}{\rho^2(x)}\right)^{s+1-\delta/2}(\rho(x))^{2s+2}}\\
    &\leq \frac{\omega_t(x-y)}{\left(\frac{t}{\rho^2(x)}\right)^{1-\delta/4}(\rho(x))^{2s+2}}\\
    &= (\rho(x))^{-2s-\delta/2} \frac{\omega_t(x-y)}{t^{1-\delta/4}}\\
    &\leq \max\left\{(\rho(x))^{-\delta}, (\rho(x))^{-\delta/2}\right\} \frac{\omega_t(x-y)}{t^{1-\delta/4}}.
\end{align*}
Since $\int_0^{\rho^2(x)}\int_{\RR^d} \frac{\omega_t(x-y)}{t^{1-\delta/4}} dydt<\infty$, it follows that
\[\lim_{s\to 0^+} \frac{1}{s\Gamma(-s)}J_{2,1}(s)=-f(x)\int_0^{\rho^2(x)}\int_{\RR^d} \frac{T_t^V(x,y)-T_t(x-y)}{t}dydt.\]

We are going to combine now the estimates for $J_{1,3}$ and $J_{2,2}$. By using again \eqref{eq: 1.2}, if $0<s<\frac{\delta}{4}$, $0<t<\rho^2(x)$ and $|x-y|>r$, we get
\begin{align*}
    \frac{T_t^V(x,y)}{t^{s+1}}&\leq C \frac{e^{-c|x-y|^2/t}}{t^{d/2+s+1}}\leq C\frac{e^{-\frac{c}{2t}r^2} e^{-\frac{c}{2t}|x-y|^2}}{\left(\frac{t}{\rho^2(x)}\right)^{d/2+s+1}\rho(x)^{d+2s+2}}\\
    &\leq C \max\left\{(\rho(x))^{-d-2},(\rho(x))^{-d-2-\delta/2}\right\} \frac{e^{-\frac{c}{2t}r^2} e^{-\frac{c}{2t}|x-y|^2}}{t^{d/2+\delta/4+1}}.
\end{align*}
Noting that
\[\int_0^{\rho^2(x)} \frac{e^{-\frac{c}{2t}r^2}}{t^{d/2+\delta/4+1}}\int_{\RR^d} e^{-\frac{c}{2t}|x-y|^2} dydt\leq C \int_0^{\rho^2(x)} \frac{e^{-\frac{c}{2t}r^2}{t^{d/2+s+1}}}{t^{\delta/4+1}}dt<\infty,\]
it yields
\[\lim_{s\to 0^+} \frac{1}{s\Gamma(-s)}\left(J_{1,3}(s)+J_{2,2}(s)\right)=f(x)\int_0^{\rho^2(x)}\int_{\RR^d \setminus B(x,r)} \frac{T_t^V(x,y)}{t}dy dt.\]

Finally, for $t>\rho^2(x)$ and $0<s<\delta/4$, \eqref{eq: 1.2} leads to
\[\frac{T_t^V(x,y)}{t^{s+1}}\leq C \frac{e^{-c|x-y|^2/t}}{t^{d/2+s+1}}\leq \frac{C}{\left(\frac{t}{\rho^2(x)}\right)^{d/2+s+1}\rho(x)^{d+2s+2}}\leq \frac{C}{t^{d/2+1}}.\]
Since $\int_{\rho^2(x)}^\infty \int_{B(x,r)} t^{-d/2-1}dt<\infty$, we have
\[\lim_{s\to 0^+} \frac{1}{s\Gamma(-s)} J_{2,3}(s)=-f(x)\int_{\rho^2(x)}^\infty \int_{B(x,r)}\frac{T_t^V(x,y)}{t} dydt.\]

By combining all of the above estimates, we conclude that \eqref{eq: teo 1.2} holds. By taking $r=1$ in \eqref{eq: teo 1.2} we have established part \ref{item: teo 1.2 - a} of Theorem~\ref{thm: 1.2}.

Let us now prove \ref{item: teo 1.2 - b}. We choose $r>1$ such that $\supp(f)\subseteq B(0,r)$.

First, assume that $|x|\geq 2r$, so $f(x)=0$. Then,
\[\int_0^\infty \frac{T_t^Vf(x)-f(x)}{t^{s+1}}dt=\int_{B(0,r)}f(y)\int_0^\infty \frac{T_t^V(x,y)}{t^{s+1}}dt dy.\]
Note that for every $y\in B(0,r)$, $|x-y|\geq |x|-|y|\geq \frac{|x|}{2}\geq r>1$. Then, by using \eqref{eq: 1.2}, we get
\begin{align*}
    \left|\int_0^\infty \frac{T_t^Vf(x)-f(x)}{t^{s+1}}dt\right|&\leq C\int_{B(0,r)}|f(y)|\int_0^\infty \frac{e^{-c\frac{|x-y|^2}{t}}}{t^{\frac{d}{2}+s+1}}dtdy\\
    &\leq C \int_{B(0,r)} \frac{|f(y)|}{|x-y|^{d+2s}}dy\\
    &\leq C \frac{\|f\|_{L^1(\RR^d)}}{|x|^d}, 
\end{align*}
for every $s\in (0,1)$. It follows that, for $1<p<\infty$
\begin{equation}\label{eq: bound Lp out B}
    \frac{1}{s|\Gamma(-s)|}\left\|\int_0^\infty\frac{T_t^Vf(\cdot)-f(\cdot)}{t^{s+1}}dt\right\|_{L^p(\RR^d\setminus B(0,2r))}\leq C \|f\|_{L^1(\RR^d)} r^{-\frac{d}{p'}},
\end{equation}
and
\begin{equation}\label{eq: bound Linfty out B}
    \frac{1}{s|\Gamma(-s)|}\left\|\int_0^\infty\frac{T_t^Vf(\cdot)-f(\cdot)}{t^{s+1}}dt\right\|_{L^\infty(\RR^d\setminus B(0,2r))}\leq C \|f\|_{L^1(\RR^d)} r^{-d},
\end{equation}
for every $s\in (0,1)$. Here, the constants are independent of $s$ since $s|\Gamma(-s)|\sim 1$ as $s\to 0^+$.

Suppose now $|x|<2r$. If $|x-y|\geq  3r$, then $|y|\geq |x-y|-|x|>r$. Therefore, $\supp(f) \subseteq B(x,3r)$ and we have
\begin{align*}
    \int_{\RR^d}(f(y)-f(x))\int_0^\infty \frac{T_t^V(x,y)}{t^{s+1}}dtdy&=\int_{B(x,3r)} (f(y)-f(x))\int_0^\infty \frac{T_t^V(x,y)}{t^{s+1}}dtdy\\
    &\quad -f(x) \int_{\RR^d\setminus B(x,3r)} \int_0^\infty \frac{T_t^V(x,y)}{t^{s+1}}dtdy\\
    &:=J_{1,1}(s,x)+J_{1,3}(s,x), \quad s\in (0,1).
\end{align*}
We will prove that
\begin{equation}\label{eq: lim J11}
    \frac{J_{1,1}(s,x)}{s\Gamma(-s)}\underset{s\to 0^+}{\longrightarrow} -\int_{B(x,3r)}(f(y)-f(x))\int_0^\infty \frac{T_t^V(x,y)}{t}dtdy
\end{equation}
uniformly on $x\in B(0,2r)$. Indeed, by using \eqref{eq: 1.2}, we get
\begin{align*}
    &\left| \frac{J_{1,1}(s,x)}{s\Gamma(-s)}+\int_{B(x,3r)}(f(y)-f(x))\int_0^\infty \frac{T_t^V(x,y)}{t}dtdy\right|\\
    &\leq \int_{B(x,3r)}|f(y)-f(x)| \int_0^\infty T_t^V(x,y)\left|\frac{1}{t^{s+1}}-\frac1t\right|dtdy\\
    &\leq C \int_{B(x,3r)}|x-y| \int_0^\infty \frac{e^{-c\frac{|x-y|^2}{t}}}{t^{\frac d2}}\left|\frac{1}{t^{s+1}}-\frac1t\right|dtdy\\
    &\leq C \int_0^{3r} u^d \int_0^\infty \frac{e^{-c\frac{u^2}{t}}}{t^{\frac d2}}\left|\frac{1}{t^{s+1}}-\frac1t\right|dtdu.
\end{align*}
We observe that, for every $t\in (0,\infty)$ and $s\in (0,\tfrac12)$,
\[\frac{e^{-c\frac{u^2}{t}}}{t^{\frac d2+1}}\left|\frac{1}{t^{s}}-1\right|\leq C\left(\frac{e^{-c\frac{u^2}{t}}}{t^{\frac d2+1}}\chi_{[1,\infty)}(t)+\frac{e^{-c\frac{u^2}{t}}}{t^{\frac d2+\frac32}}\chi_{(0,1)}(t)\right).\]
Hence, we have
\begin{align*}
    \int_0^{3r} u^d \int_0^\infty \frac{e^{-c\frac{u^2}{t}}}{t^{\frac d2}}\left|\frac{1}{t^{s+1}}-\frac1t\right|dtdu &\lesssim \int_0^{3r} u^d \int_0^1 \frac{e^{-c\frac{u^2}{t}}}{t^{\frac d2+1}} dtdu+\int_0^{3r} u^d \int_1^\infty \frac{e^{-c\frac{u^2}{t}}}{t^{\frac d2+\frac32}} dtdu\\
    &\lesssim \int_0^{3r} u^d \int_0^1 \left(\frac{t}{u^2}\right)^{\frac d2+\frac14} \frac{dt}{t^{\frac d2+1}}du+\int_0^{3r} u^d \int_1^\infty \frac{1}{t^{\frac d2+\frac32}} dtdu\\
    &\lesssim \left(\int_0^{3r} \frac{du}{u^{\frac12}} \right)\left(\int_0^1 \frac{dt}{t^{\frac34}}\right)+\int_0^{3r} u^d \int_1^\infty \frac{dt}{t^{\frac32}} du<\infty.
\end{align*}
The Dominated Convergence Theorem leads to
\[\lim_{s\to 0^+} \int_0^{3r} u^d \int_0^\infty \frac{e^{-c\frac{u^2}{t}}}{t^{\frac d2}}\left|\frac{1}{t^{s+1}}-\frac1t\right|dtdu=0.\]
Thus, \eqref{eq: lim J11} is proved.

As before, we proceed to estimate $J_2(s,x)$ given below, and leave the study of $J_{1,3}(s,x)$ for later. We write
\begin{align*}
    J_2(s,x)&=f(x)\int_0^\infty \frac{T_t^V(1)(x)-T_t(1)(x)}{t^{s+1}} dt\\
    &=f(x)\int_0^{\rho^2(x)} \int_{\RR^d} \frac{T_t^V(x,y)-T_t(x-y)}{t^{s+1}}dydt +f(x)\int_{\rho^2(x)}^\infty \int_{\RR^d\setminus B(x,r)} \frac{T_t^V(x,y)}{t^{s+1}}dydt\\
    &\quad +f(x)\int_{\rho^2(x)}^\infty \int_{B(x,r)} \frac{T_t^V(x,y)}{t^{s+1}}dydt-f(x)\int_{\rho^2(x)}^\infty \int_{\RR^d} \frac{T_t(x-y)}{t^{s+1}}dydt\\
    &:=J_{2,1}(s,x)+J_{2,2}(s,x)+J_{2,3}(s,x)+J_{2,4}(s,x)
\end{align*}
for $s\in (0,\frac{\delta}{4})$.

By \eqref{eq: 1.5} we get
\begin{align*}
    \left|\frac{J_{2,1}(s,x)}{s\Gamma(-s)}+\int_0^{\rho^2(x)} \int_{\RR^d} \frac{T_t^V(x,y)-T_t(x-y)}{t}dydt\right|&\leq \int_0^{\rho^2(x)} \int_{\RR^d} |T_t^V(x,y)-T_t(x-y)|\left|\frac{1}{t^{s+1}}-\frac1t\right|dydt\\
    &\lesssim \int_0^{\rho^2(x)}\left|\frac{1}{t^{s+1}}-\frac1t\right| \int_{\RR^d} \left(\frac{\sqrt{t}}{\rho(x)}\right)^\delta \omega_t(x-y) dydt\\
    &\lesssim \int_0^{\rho^2(x)} \left(\frac{\sqrt{t}}{\rho(x)}\right)^\delta \left|\frac{1}{t^{s}}-1\right|\frac{dt}{t},
\end{align*}
for every $s\in (0,\frac{\delta}{4})$.

Suppose $\mathbb K$ is some compact subset of $\RR^d$. Then, there exist $x_1,\dots, x_m\in \mathbb K$ such that $\mathbb K\subseteq \bigcup_{j=1}^m B(x_j,\rho(x_j))$. Therefore, from \eqref{eq: rox_vs_roy}, for every $y\in B(x_j,\rho(x_j))$ and $j=1, \dots, m$, we can obtain that there exist constants $0<K_1<K_2<\infty$ for which $K_1\leq \rho(y)\leq K_2$ for every $y\in \mathbb K$.

Applying this property on $\overline{B(0,2r)}$, we get that, for every $s\in (0,\frac{\delta}{4})$,
\begin{equation*}
    \left|\frac{J_{2,1}(s,x)}{s\Gamma(-s)}+\int_0^{\rho^2(x)} \int_{\RR^d} \frac{T_t^V(x,y)-T_t(x-y)}{t}dydt\right|\lesssim \frac{1}{K_1^\delta} \int_0^{A} t^{\frac{\delta}{2}-1}\left|\frac{1}{t^{s}}-1\right|dt
\end{equation*}
for $A=K_2^2>0$.

Since 
\begin{align}\label{eq: integral 0 a A}
    \int_0^{A} t^{\frac{\delta}{2}-1}\left|\frac{1}{t^{s}}-1\right|dt& \leq \int_0^{A} t^{\frac{\delta}{2}-1}\left(\frac{1}{t^{s}}+1\right)dt\leq \int_0^{A} t^{\frac{\delta}{2}-1}+\frac{1}{A^s}\int_0^{A} \frac{t^{\frac{\delta}{2}-1}}{\left(\frac{t}{A}\right)^{s}}dt\nonumber\\
    &\leq \int_0^{A} t^{\frac{\delta}{2}-1}+A^{\frac{\delta}{4}-s}\int_0^{A} t^{\frac{\delta}{4}-1} dt <\infty
\end{align}
for every $s\in (0,\frac{\delta}{4})$, we conclude that
\begin{equation}\label{eq: lim J21}
    \frac{J_{2,1}(s,x)}{s\Gamma(-s)}\underset{s\to 0^+}{\longrightarrow} -\int_0^{\rho^2(x)} \int_{\RR^d} \frac{T_t^V(x,y)-T_t(x-y)}{t}dydt
\end{equation}
uniformly in $x\in B(0,2r)$.

We also have that 
\begin{align*}
    \left|\frac{J_{1,3}(s,x)+J_{2,2}(s,x)}{s\Gamma(-s)}+\int_0^{\rho^2(x)}\int_{\RR^d\setminus B(x,3r)}\frac{T_t^V(x,y)}{t}dydt\right|&\lesssim \int_0^{\rho^2(x)}\int_{\RR^d\setminus B(x,3r)}T_t^V(x,y)\left|\frac{1}{t^{s+1}}-\frac1t\right|dydt\\
    &\lesssim \int_0^{\rho^2(x)}\int_{\RR^d\setminus B(x,3r)}\frac{e^{-c\frac{|x-y|^2}{t}}}{t^{\frac d2}}\left|\frac{1}{t^{s+1}}-\frac1t\right|dydt\\
    &\lesssim \int_0^{\rho^2(x)}\frac{e^{-c\frac{r^2}{t}}}{t}\left|\frac{1}{t^{s}}-1\right|dt\\
    &\lesssim r^{-\delta}\int_0^A t^{\frac{\delta}{2}-1}\left|\frac{1}{t^{s}}-1\right|dt<\infty
\end{align*}
proceeding as in \eqref{eq: integral 0 a A}. Therefore,
\begin{equation}\label{eq: lim J13+J22}
    \frac{J_{1,3}(s,x)+J_{2,2}(s,x)}{s\Gamma(-s)}\underset{s\to 0^+}{\longrightarrow}-\int_0^{\rho^2(x)}\int_{\RR^d\setminus B(x,3r)}\frac{T_t^V(x,y)}{t}dydt
\end{equation}
uniformly in $x\in B(0,2r)$.

To estimate the limit for $J_{2,3}(s,x)$ as $s\to 0^+$, we can write
\begin{align*}
    \left|\frac{1}{s\Gamma(-s)} J_{2,3}(s,x)+\int_{\rho^2(x)}^\infty\int_{B(x,3r)}\frac{T_t^V(x,y)}{t}dydt\right|&\lesssim \int_0^{\rho^2(x)}\int_{B(x,3r)}\frac{e^{-c\frac{|x-y|^2}{t}}}{t^{\frac d2}}\left|\frac{1}{t^{s+1}}-\frac1t\right|dydt\\
    &\lesssim r^d\int_A^\infty \frac{1}{t^{\frac d2+1}}\left|\frac{1}{t^s}-1\right|dt, 
\end{align*}
where, as before, the constants involved do not depend on $x$. Since
\[ \frac{1}{t^{\frac d2+1}}\left(\frac{1}{t^s}+1\right)= \frac{1}{t^{\frac d2+1}}\left(\frac{1}{A^s\left(\frac{t}{A}\right)^s}+1\right)\leq C(A) \frac{1}{t^{\frac d2+1}}\]
for every $s\in (0,1)$ and $t>A$. Hence,
\[\lim_{s\to 0^+} \int_A^\infty \frac{1}{t^{\frac d2+1}}\left|\frac{1}{t^s}-1\right|dt=0\]
and
\begin{equation}\label{eq: lim J23}
    \frac{1}{s\Gamma(-s)} J_{2,3}(s,x)\underset{s\to 0^+}{\longrightarrow} -\int_{\rho^2(x)}^\infty\int_{B(x,3r)}\frac{T_t^V(x,y)}{t}dydt
\end{equation}
uniformly in $x\in B(0,2r)$.

Finally, it is left to prove
\begin{equation}\label{eq: lim J24}
    \frac{\frac{1}{\Gamma(-s)} J_{2,4}(s,x)-1}{s}\underset{s\to 0^+}{\longrightarrow} -2\log(\rho(x))-\gamma
\end{equation}
uniformly in $x\in B(0,2r)$. 

By using the differential mean value theorem, we have
\[\frac{u^{-2s}-1}{u}=-2\log (u) u^{-2\xi}\]
for some $\xi\in (0,s)$, with $s\in (0,1)$ and $u>0$. Let $0<a<b<\infty$. We have that
\[b^{-2z}-1\leq u^{-2z}-1\leq a^{-2z}-1, \quad z>0, \ a\leq u\leq b.\]
Hence, $u^{-2z}\to 1$ as $z\to 0^+$ uniformly in $u\in[a,b]$. This yields $\frac{u^{-2s}-1}{u}\to -2\log(u)$ as $s\to 0^+$, uniformly in $u\in[a,b]$. 

As it was proved above, since $K_1\leq \rho(x)\leq K_2$ for every $x\in B(0,2r)$, we conclude that \eqref{eq: lim J24} holds.

By combining \eqref{eq: lim J11}, \eqref{eq: lim J21}, \eqref{eq: lim J13+J22}, \eqref{eq: lim J23} and \eqref{eq: lim J24}, we obtain that
\begin{align}\label{eq: lim uniform}
    \frac1s \left(\frac{1}{\Gamma(-s)}\int_0^\infty \frac{T_t^V(f)(x)-f(x)}{t^{s+1}}dt-1\right)\underset{s\to 0^+}{\longrightarrow}& \int_{B(x,r)}(f(x)-f(y))\int_0^\infty \frac{T_t^V(x,y)}{t}dtdy\nonumber\\
    & -\int_{\RR^d\setminus B(x,3r)} f(y)\int_0^\infty \frac{T_t^V(x,y)}{t}dtdy-K(x,3r)f(x)
\end{align}
uniformly in $x\in B(0,2r)$.

Let us prove the convergence on $L^p(\RR^d)$ for $1<p\leq \infty$. Let $\epsilon>0$. According to \eqref{eq: bound Lp out B} and \eqref{eq: bound Linfty out B}, there exists $r_0>0$ such that
\[\frac{1}{s|\Gamma(-s)|}\left\|\int_0^\infty \frac{T_t^V(f)(\cdot)-f(\cdot)}{t^{s+1}}dt\right\|_{L^p(\RR^d\setminus B(0,2r_0))}<\epsilon, \quad s\in (0,1).\]

By \eqref{eq: lim uniform}, there exists $s_0\in (0,1)$ for which
\begin{align*}
    &\left\|\frac1s \left(\frac{1}{\Gamma(-s)}\int_0^\infty \frac{T_t^V(f)(\cdot)-f(\cdot)}{t^{s+1}}dt-1\right)-\int_{B(\cdot,r)}(f(\cdot)-f(y))\int_0^\infty \frac{T_t^V(\cdot,y)}{t}dtdy\right. \\
    & \quad \left.-\int_{\RR^d\setminus B(\cdot,3r)} f(y)\int_0^\infty \frac{T_t^V(\cdot,y)}{t}dtdy-K(\cdot,3r)f(\cdot)\right\|_{L^p(B(0,2r_0))}<\epsilon, \quad 0<s<s_0.
\end{align*}
Therefore,
\[\lim_{s\to 0^+} \frac1s \left(\frac{1}{\Gamma(-s)}\int_0^\infty \frac{T_t^V(f)-f}{t^{s+1}}dt-1\right)=(\log \mathcal{L}_V)f, \quad \text{in }L^p(\RR^d),\]
and Theorem~\ref{thm: 1.2} is now proved.

The results in Theorem~\ref{thm: 1.2} are also valid when we take the limit on $s$ to zero from the left.

\begin{cor}\label{coro: 2.1}
    Let $d\geq 3$ and $V\in \RH_q$ with $q>\frac d2$. If $f\in \Lip^\theta (\RR^d)$ for some $\theta\in (0,1]$ and $\int_{\RR^d} |f(y)|(1+|y|)^{-d}dy<\infty$, then
    \[(\log \mathcal{L}_V)(f)(x)=-\lim_{h\to 0^+}\frac{\mathcal{L}^{-h}_V-I}{h}(f)(x), \quad  x\in \RR^d.\]
\end{cor}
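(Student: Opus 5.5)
Throughout, $(\log \mathcal{L}_V)(f)(x)$ is understood as the right-hand side of \eqref{eq: teo 1.2} (the extended pointwise definition noted after Theorem~\ref{thm: 1.2}). Write $\mathcal{L}_V^{-h} f(x) = \frac{1}{\Gamma(h)}\int_0^\infty T_t^V f(x)\, t^{h-1}\,dt$, as in \eqref{eq: 1.6}. The plan is to repeat the proof of Theorem~\ref{thm: 1.2}\ref{item: teo 1.2 - a} with $s=-h<0$.

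The first step is to rewrite the fractional integral in the same form as the fractional power. Since $\Gamma(-s)$ at $s=-h$ equals $\Gamma(h)$, the hypersingular expression with exponent $s=-h$ is
\[
\frac{1}{\Gamma(h)}\int_0^\infty \frac{T_t^Vf(x)-f(x)}{t^{1-h}}\,dt .
\]
This diverges at $t=\infty$ because of the term $-f(x)$. So I will not subtract $f(x)$ globally. Instead I will insert the identity
\[
f(x)=\frac{f(x)\,\rho(x)^{-2h}\Gamma(h)^{-1}}{\rho(x)^{-2h}}
\]
only through the piece corresponding to $J_{2,4}$. Concretely, fix $x$ and write
\[
T_t^Vf(x)=\bigl(T_t^Vf(x)-T_t^V(1)(x)f(x)\bigr)+f(x)\bigl(T_t^V(1)(x)-T_t(1)(x)\bigr)+f(x)\chi_{(0,\rho^2(x))}(t)+f(x)\chi_{(\rho^2(x),\infty)}(t).
\]
The second summand is then split into the $J_{2,1},J_{2,2},J_{2,3}$ analogues. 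The pieces that are integrated near $t=0$ carry the factor $t^{h-1}$, which is integrable there. The last piece gives
\[
\frac{f(x)}{\Gamma(h)}\int_0^{\rho^2(x)} t^{h-1}\,dt=\frac{f(x)\rho(x)^{2h}}{h\Gamma(h)} .
\]
Since $h\Gamma(h)=\Gamma(1+h)$, the quantity $\frac{1}{h}\bigl(\rho(x)^{2h}/\Gamma(1+h)-1\bigr)$ tends to $2\log\rho(x)+\gamma$. This is the negative of the limit of $J_{2,4}$, consistent with the sign $-\lim_{h\to0^+}$ in the statement.

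Next, I will divide every remaining term by $h\Gamma(h)\to1$ and pass to the limit $h\to0^+$ by dominated convergence. For each $t$-integral I dominate $t^{h-1}$ by $t^{-1}\max(1,t^{1/4})$ for $h<1/4$.

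\textbf{Local difference term.} Here $f\in\Lip^\theta$ gives $|f(y)-f(x)|\le C|x-y|^\theta$. Together with \eqref{eq: 1.3} this yields a kernel bound $C|x-y|^{\theta-d+2h}$, integrable on $B(x,1)$ and uniform in small $h$. This replaces the $C_c^\infty$ bound used in the theorem.

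\textbf{Far term.} On $\RR^d\setminus B(x,1)$ I apply \eqref{eq: 1.3} with $N$ large. For $t>\rho^2(x)$ this gives decay like $t^{-d/2-N/2}$, so $\int_0^\infty T_t^V(x,y)t^{h-1}\,dt\le C_x|x-y|^{-d-\epsilon}$ uniformly in small $h$. Combined with $\int|f|(1+|y|)^{-d}<\infty$, this is enough for dominated convergence. The pieces $J_{2,1}$, $J_{2,2}$, $J_{2,3}$ are treated exactly as in the proof of Theorem~\ref{thm: 1.2}, using \eqref{eq: 1.5} and \eqref{eq: 1.3}; the sign of $s$ is irrelevant there since $|s|$ is small.

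\textbf{Main obstacle.} The hard part is the large-$t$ behaviour once the factor is $t^{h-1}$ rather than $t^{-s-1}$: the Gaussian bound \eqref{eq: 1.2} alone only gives $t^{-d/2+h-1}$, and the global $L^1$-type integrability of $f$ is only the weak $(1+|y|)^{-d}$ condition. I would handle this with the extra decay $(\sqrt t/\rho)^{-N}$ from \eqref{eq: 1.3}, choosing $N>2$. By \eqref{eq: rox_vs_roy}, $\rho(y)\ge c\rho(x)(1+|x-y|/\rho(x))^{-N_0}$, which should give a bound for $\int_0^\infty T_t^V(x,y)t^{h-1}\,dt$ decaying faster than $|x-y|^{-d}$. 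This makes both the far-field and the $J_{1,3}+J_{2,2}$ terms converge. Collecting the limits then reproduces \eqref{eq: teo 1.2} with a minus sign, which proves the corollary.
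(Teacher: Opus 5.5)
Your proposal is correct and is essentially the paper's argument. The paper also splits at $t=\rho^2(x)$ into pieces $A_1,\dots,A_4$, and uses the same tools for each:
\begin{itemize}
\item the H\"older bound with \eqref{eq: 1.2} for the difference term;
\item \eqref{eq: 1.3} for large $t$;
\item \eqref{eq: 1.5} for $T_t^V(1)-1$;
\item the expansion of $\rho(x)^{2h}/\Gamma(1+h)$, which gives $2\log\rho(x)+\gamma$.
\end{itemize}
The only difference is bookkeeping: the paper does not split in $y$ before letting $h\to0^+$, and instead rearranges into \eqref{eq: teo 1.2} at the end, which is where it invokes $\int|f|(1+|y|)^{-d}<\infty$.

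When you write it up, three corrections. First, make explicit that $f(x)\chi_{(\rho^2(x),\infty)}(t)$ cancels against the $-f(x)T_t(1)(x)\chi_{(\rho^2(x),\infty)}(t)$ part of your second summand before integrating, since each diverges against $t^{h-1}$. Second, delete the displayed ``identity'' for $f(x)$, which is false. Third, the $\rho(x)$-factor in \eqref{eq: 1.3} alone gives the far-field decay, so the lower bound on $\rho(y)$ from \eqref{eq: rox_vs_roy} is unnecessary, and it points the wrong way for this purpose anyway.
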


\begin{proof}Let $h>0$ and $x\in \RR^d$. We can write
\begin{align*}
    \mathcal{L}^{-h}_Vf(x)&-f(x)\\
    &=\frac{1}{\Gamma(h)}\int_0^{\rho^2(x)}\left(T_t^V(f)(x)-T_t^V(1)(x)f(x)\right)t^{h-1} dt+\frac{1}{\Gamma(h)}\int_{\rho^2(x)}^\infty T_t^V(f)(x)t^{h-1} dt\\
    &\quad +\frac{f(x)}{\Gamma(h)}\int_0^{\rho^2(x)} \left(T_t^V(1)(x)-T_t(1)(x)\right)t^{h-1}dt+\left(\frac{1}{\Gamma(h)}\int_0^{\rho^2(x)} T_t(1)(x)t^{h-1} dt-1\right)f(x)\\
    &:=A_1(x,h)+A_2(x,h)+A_3(x,h)+A_4(x,h).
\end{align*}

By using that $f\in \Lip^\theta(\RR^d)$ and (\ref{eq: 1.2}) we obtain
\begin{align*}
 T_t^V(x,y)|f(y)-f(x)|t^{h-1}&\le C|x-y|^\theta\frac{e^{-c\frac{|x-y|^2}{t}}}{t^{d/2+1-h}}\\
 &\le C\frac{e^{-\frac c2\frac{|x-y|^2}{t}}}{t^{d/2}}\left(\frac{\rho^2(x)}{t}\right)^{1-h-\theta/2}\rho(x)^{\theta-2+2h}\\
 &\le C\frac{e^{-\frac c2\frac{|x-y|^2}{t}}}{t^{d/2}}\left(\frac{\rho^2(x)}{t}\right)^{1-\theta/2}\max\{\rho^{\theta-2}(x),\rho^{\theta}(x)\}\\
 &\le C\frac{e^{-\frac c2\frac{|x-y|^2}{t}}}{t^{1+(d-\theta)/2}}\max\{1,\rho^2(x)\},
    \end{align*}
when $h\in (0,1)$, $y\in\mathbb{R}^d$ and $0<t<\rho^2(x)$.

Since 
\[
\int_0^{\rho^2(x)}\int_{\mathbb{R}^d}\frac{e^{-\frac c2\frac{|x-y|^2}{t}}}{t^{(d-\theta)/2+1}}dydt\le C\int_0^{\rho^2(x)}t^{\theta/2-1}dt\le C\rho^\theta(x),
\]
the Dominated Convergence Theorem leads to
\[
\lim_{h\to 0^+}\frac{A_1(x,h)}{h}=\int_0^{\rho^2(x)}\frac{T_t^V(f)(x)-T_t^V(1)(x)f(x)}{t}dt.
\]


By using that $f\in \Lip^\theta(\mathbb{R}^d)$ and (\ref{eq: 1.3}) we get
\begin{align*}
T_t^V(x,y)|f(y)|t^{h-1}&\le {C_N}\frac{e^{-c\frac{|x-y|^2}{t}}}{t^{1-h+d/2}} \left(\frac{\rho(x)}{\rho(x)+\sqrt{t}}\right)^{{N}}(|x-y|^\theta+|f(x)|)\\
&\le {C_N}\rho(x)^{{N}}\frac{e^{-\frac c2\frac{|x-y|^2}{t}}}{t^{{N/2+1}-h+d/2}}(t^{\theta/2}+|f(x)|)\\
&\le {C_N}\rho(x)^{{N}}\frac{e^{-\frac c2\frac{|x-y|^2}{t}}}{t^{d/2}}\left(\frac{\rho^2(x)}{t}\right)^{{N/2+1}-h}(t^{\theta/2}+|f(x)|)\rho(x)^{2h-{N+2}}\\
&\le {C_N}\rho(x)^{{N}}\frac{e^{-\frac c2\frac{|x-y|^2}{t}}}{t^{d/2}}\left(\frac{\rho^2(x)}{t}\right)^{{N/2}}(t^{\theta/2}+|f(x)|)\rho(x)^{2h-{N+2}}\\
&\le {C_N}\rho(x)^{{N-2}+2h}\frac{e^{-\frac c2\frac{|x-y|^2}{t}}}{t^{d/2}}\frac{1}{t^{{N/2}}}(t^{\theta/2}+|f(x)|)\\
\end{align*}
for $h\in (0,1)$, $y\in \mathbb{R}^d$ and $t\ge \rho^2(x)$.

{By choosing $N>3$,}
\[
\int_{\rho^2(x)}^\infty \int_{\mathbb{R}^d}\frac{e^{-\frac c2\frac{|x-y|^2}{t}}}{t^{d/2}}\frac{1}{t^{{N/2}}}(t^{\theta/2}+|f(x)|)dydt\le C\int_{\rho^2(x)}^\infty\frac{t^{\theta/2}+|f(x)|}{t^{{N/2}}}dt<\infty.
\]
According to Dominated Convergence Theorem we get
\[
\lim_{h\to 0^+}\frac{A_2(x,h)}{h}=\int_{\rho^2(x)}^\infty\frac{T_t^V(f)(x)}{t}dt.
\]


By (\ref{eq: 1.5}) it follows that
\begin{align*}
 |T_t^V(x,y)-T_t(x-y)|t^{h-1}&\le C\frac{1}{\rho(x)^\delta}\frac{\omega_t(x-y)}{t^{1-h-\delta/2}}\\
& \le C\rho(x)^{2h-2} \omega_t(x-y) \left(\frac{\rho^2(x)}{t}\right)^{1-h-\delta/2}\\
&\le C\rho(x)^{2h-\delta}\omega_t(x-y) t^{-1+\delta/2}\\
&\le C\max\{\rho(x)^{2-\delta},\rho(x)^{-\delta}\}\omega_t(x-y)t^{-1+\delta/2},
\end{align*}
when $h\in (0,1)$, $y\in \mathbb{R}^d$ and $0<t<\rho^2(x)$.

Since
\[
\int_0^{\rho^2(x)}\int_{\mathbb{R}^d}\omega_t(x-y)t^{-1+\delta/2}dydt\le C\rho(x)^\delta,
\]
the Dominated Convergence Theorem leads to
\[
\lim_{h\to 0^+}\frac{A_3(x,h)}{h}=f(x)\int_0^{\rho^2(x)}\frac{T_t^V(1)(x)-T_t(1)(x)}{t}dt.
\]


Finally, we have that
\begin{align*}
\frac{A_4(x,h)}{h}&=\frac{f(x)}{h}\left(\frac{\rho(x)^{2h}}{h\Gamma(h)}-1\right)=f(x)\left(\frac{\rho(x)^{2h}-1}{\Gamma(h+1)h}+ \frac{1-\Gamma(h+1)}{\Gamma(h+1)h}\right), \quad h\in (0,1).
\end{align*}
Then,
\[
\lim_{h\to 0^+}\frac{A_4(x,h)}{h}=f(x)( \gamma+2\log\rho(x)).
\]


We conclude that 
\begin{align*}
-\lim_{h\to 0^+}\frac{\mathcal{L}_V^{-h}(f)(x)-f(x)}{h}&=\int_0^{\rho^2(x)}\frac{T_t^V(1)f(x)-T_t^V(f)(x)}{t}dt\\
&-\int_{\rho^2(x)}^\infty \frac{T_t^V(f)(x)}{t}dt\\
&+f(x)\int_0^{\rho^2(x)}\frac{T_t(1)(x)-T_t^V(1)(x)}{t}dt\\
&-f(x)(\gamma+2\log\rho(x)).
\end{align*}

    By a rearrangement of the terms and taking into account that $\int_{\mathbb{R}^d}|f(y)|(1+|y|)^{-d}<\infty$,  Theorem~\ref{thm: 1.2} and the comments after this one allow us to conclude that
    \[-\lim_{h\to 0^+}\frac{\mathcal{L}^{-h}_V(f)(x)-f(x)}{h}=\lim_{h\to 0^+}\frac{\mathcal{L}^{h}_V(f)(x)-f(x)}{h}=(\log \mathcal{L}_V)(f)(x).\qedhere\]
\end{proof}

\section{Proof of Theorem~\ref{thm: 1.3}}\label{sec: 4}

We define $u(x,t) = \mathcal{L}_V^{-t} f(x)$, $x \in \mathbb{R}^d$ and $t > 0$. Since $f \in L^1(\mathbb{R}^d)$, for every $0 < t < d/2$, $\mathcal{L}_V^{-t} f \in L^{d/(d-2t),\infty}(\mathbb{R}^d)$. Moreover, since $f \in \Lip_V^\theta$, we have $\mathcal{L}_V^{-t} f \in \Lip_V^{\theta + t}$ for $t \in (0, 1-\theta)$. Then, $\mathcal{L}_V^{-t} f \in \Lip^{\theta + t}$ and $\| \mathcal{L}_V^{-t} f (\cdot) \rho(\cdot)^{-(\theta+t)} \|_\infty < \infty$ for every $t \in (0, 1-\theta)$. It follows that

\begin{equation*}
    \int_{\mathbb{R}^d} \frac{|\mathcal{L}_V^{-t} f(x)|}{(1+|x|)^d} \, dx \leq \| \mathcal{L}_V^{-t} f(\cdot) \rho(\cdot)^{-(\theta+t)} \|_{\infty}
    \int_{\mathbb{R}^d} \frac{\rho(x)^{\theta+t}}{(1+|x|)^d} \, dx, \quad t \in (0, 1-\theta).
\end{equation*}

According to the comments after Theorem~\ref{thm: 1.2} and Corollary~\ref{coro: 2.1}, we obtain
\begin{equation}\label{eq: 3.1}
    \Log (\mathcal{L}_V^{-t} f)(x) = \lim_{h \rightarrow 0^+} \frac{\mathcal{L}_V^{-h} - I}{h} \mathcal{L}_V^{-t} f(x), \quad x \in \mathbb{R}^d \text{ and } t \in (0, 1-\theta).
\end{equation}
We are going to show that
\begin{equation*}
    \mathcal{L}_V^{-h} (\mathcal{L}_V^{-t} f)(x) = \mathcal{L}_V^{-(h+t)} f(x), \quad t > 0, \, x \in \mathbb{R}^d \text{ and } 0 < h < 1 - t - \theta.
\end{equation*}

Assume first that $0 < h + t + \theta < 1$, with $h, t > 0$. We have that
\begin{equation*}
    \mathcal{L}_V^{-t} f(x) = \frac{1}{\Gamma(t)} \int_{\mathbb{R}^d} f(y) \int_0^\infty T_u^V(x,y) u^{t-1} \, du \, dy, \quad x \in \mathbb{R}^d.
\end{equation*}

We can write
\begin{equation*}
    \begin{split}
        \int_{\mathbb{R}^d} \int_{\mathbb{R}^d} |f(y)| & \int_{0}^\infty T_u^V(z,y) u^{t-1} \, du \int_0^{\infty} T_s^V(x,z) s^{h-1} \, ds \, dy \, dz \\
        & = \int_{\mathbb{R}^d} |f(y)| \int_{0}^\infty \int_0^{\infty} \int_{\mathbb{R}^d} T_u^V(z,y) T_s^V(x,z) \, dz \, u^{t-1} s^{h-1} \, du \, ds \, dy \\
        & = \int_{\mathbb{R}^d} |f(y)| \int_{0}^\infty \int_0^{\infty} T_{u+s}^V(x,y) u^{t-1} s^{h-1} \, du \, ds \, dy \\
        & = \int_{\mathbb{R}^d} |f(y)| \int_{0}^\infty \int_u^{\infty} T_{v}^V(x,y) (v-u)^{h-1} \, dv \, u^{t-1} \, du \, dy \\
        & = \int_{\mathbb{R}^d} |f(y)| \int_{0}^\infty \int_0^{v} (v-u)^{h-1} u^{t-1} \, du \, T_{v}^V(x,y) \, dv \, dy \\
        & = \frac{\Gamma(h)\Gamma(t)}{\Gamma(t+h)} \int_{\mathbb{R}^d} |f(y)| \int_{0}^\infty v^{h+t-1} T_{v}^V(x,y) \, dv \, dy \\
        & = \Gamma(h)\Gamma(t) \, \mathcal{L}_V^{-h-t}(|f|)(x) < \infty, \quad x \in \mathbb{R}^d.
    \end{split}
\end{equation*}

This justifies the interchange in the order of integration, obtaining 
\begin{equation}\label{eq: 3.2}
    \mathcal{L}_V^{-h}(\mathcal{L}_V^{-t} f)(x) = \mathcal{L}_V^{-(t+h)} f(x), \quad x \in \mathbb{R}^d.
\end{equation}

According to~\eqref{eq: 3.1} we get
\begin{equation*}
    \lim_{h \rightarrow 0^+} \frac{\mathcal{L}_V^{-(t+h)} f(x) - \mathcal{L}_V^{-t} f(x)}{h} = -\log(\mathcal{L}_V) (\mathcal{L}_V^{-t} f)(x), \quad x \in \mathbb{R}^d \text{ and } t \in (0, 1-\theta).
\end{equation*}
We are going to show that
\begin{equation}\label{eq: 3.3}
    \lim_{h \rightarrow 0^+} \frac{\mathcal{L}_V^{-(t-h)} f(x) - \mathcal{L}_V^{-t} f(x)}{-h} = -\log(\mathcal{L}_V) (\mathcal{L}_V^{-t} f)(x), \quad x \in \mathbb{R}^d \text{ and } t \in (0, 1-\theta).
\end{equation}

By using~\eqref{eq: 3.2} we get
\begin{equation*}
    \mathcal{L}_V^{-(t-h)} f(x) - \mathcal{L}_V^{-t} f(x) = \mathcal{L}_V^{-(t-h)} f(x) - \mathcal{L}_V^{-h} (\mathcal{L}_V^{-(t-h)} f)(x) = \frac{I - \mathcal{L}_V^{-h}}{h} \mathcal{L}_V^{-(t-h)} f(x),
\end{equation*}
for $x \in \mathbb{R}^d$ and $0 < h < t < 1-\theta$. 

We now prove that 
\begin{equation}\label{eq: 3.4}
\lim_{h \rightarrow 0^+} \mathcal{L}_V^{-(t-h)} f(x) = \mathcal{L}_V^{-t} f(x), \quad x \in \mathbb{R}^d \text{ and } 0 < t < 1-\theta.     
\end{equation}
Let $x \in \mathbb{R}^d$ and $0 < t < 1-\theta$. We decompose $\mathcal{L}_V^{-(t-h)} f(x)$ as follows:
\begin{equation*}
    \begin{split}
        \mathcal{L}_V^{-(t-h)} f(x) & = \frac{1}{\Gamma(t-h)} \left[
        \int_0^1 u^{t-h-1} \int_{\mathbb{R}^d} T_u^V(x,y) f(y) \, dy \, du + \int_1^\infty u^{t-h-1} \int_{\mathbb{R}^d} T_u^V(x,y) f(y) \, dy \, du \right] \\
        & =: I_1(h) + I_2(h), \quad 0 < h < t.
    \end{split}
\end{equation*}

Since $f \in \textup{Lip}_V^{\theta}$, by using~\eqref{eq: 1.2} we get, {for $0 < h < t/2$},
\begin{equation}\label{eq: 3.5}
    \begin{split}
        \int_0^1 u^{t-h-1} \int_{\mathbb{R}^d} T_u^V(x,y) |f(y)| \, dy \, du & \leq C \int_0^1 u^{t-h-1} \int_{\mathbb{R}^d} T_u(x-y) |f(y)| \, dy \, du \\
        & \leq C \int_0^1 u^{t/2-1} \int_{\mathbb{R}^d} T_u(x-y) |f(y)| \, dy \, du \\
        & \leq C \left( \int_{|x-y|<1} \frac{|f(y)|}{|x-y|^{d-t}} \, dy + \int_{\mathbb{R}^d} |f(y)| \, dy \right) \\
        & \leq C \left( \int_{|x-y|<1} \frac{|f(y)-f(x)| + |f(x)|}{|x-y|^{d-t}} \, dy + \int_{\mathbb{R}^d} |f(y)| \, dy \right) < \infty,
    \end{split}
\end{equation}
where $C$ does not depend on $h$. Then, the Dominated Convergence Theorem leads to
\begin{equation*}
    \lim_{h \rightarrow 0^+} I_1(h) = \frac{1}{\Gamma(t)} \int_0^1 u^{t-1} \int_{\mathbb{R}^d} T_u^V(x,y) f(y) \, dy \, du.
\end{equation*}

Since $f \in L^1(\mathbb{R}^d)$, by using again~\eqref{eq: 1.2} we obtain
\begin{equation}\label{eq: 3.6}
    \begin{split}
        \int_1^\infty u^{t-h-1} \int_{\mathbb{R}^d} T_u^V(x,y) |f(y)| \, dy \, du  & \leq C \int_1^\infty u^{t-h-1-d/2} \, du \int_{\mathbb{R}^d} |f(y)| \, dy \\
        & \leq C \int_1^\infty u^{t-1-d/2} \, du \int_{\mathbb{R}^d} |f(y)| \, dy < \infty, \quad 0 < h < t, 
    \end{split}
\end{equation}
where again $C$ does not depend on $h$. According to the Dominated Convergence Theorem we get
\begin{equation*}
    \lim_{h \rightarrow 0^+} I_2(h) = \frac{1}{\Gamma(t)} \int_{1}^{\infty} u^{t-1} \int_{\mathbb{R}^d} T_u^V(x,y) f(y) \, dy \, du,
\end{equation*}
and~\eqref{eq: 3.4} is proved.

By following carefully the proofs of Lemma 4.1 and Theorem 1.6 in~\cite{dLT}, we can deduce that if $0 < a < b < +\infty$ and $f \in \textup{Lip}^\theta_V$ with $\theta + b < 1$, there exists $C > 0$ such that
\begin{equation}\label{eq: 3.7}
    \|\mathcal{L}_V^{-s} f\|_{\textup{Lip}_V^{\theta+s}} \leq C \|f\|_{\textup{Lip}_V^{\theta}}, \quad s \in [a,b],
\end{equation}
where $C$ does not depend on $s \in [a,b]$. 

Let $x \in \mathbb{R}^d$ and $0 < t < 1-\theta$. By decomposing as in the proof of Theorem~\ref{thm: 1.2} and Corollary~\ref{coro: 2.1}, we consider
\begin{equation*}
    \begin{split}
        J_1(h) & = \int_{B(x,1)} \left( \mathcal{L}_V^{-(t-h)} f(x) - \mathcal{L}_V^{-(t-h)} f(y) \right) \int_0^\infty T_u^V(x,y) u^{h-1} \, du \, dy, \\
        J_2(h) & = \int_{\mathbb{R}^d \setminus B(x,1)} \mathcal{L}_V^{-(t-h)} f(y) \int_0^\infty T_u^V(x,y) u^{h-1} \, du \, dy, \\
        J_3(h) & = \left( - \int_{\mathbb{R}^d \setminus B(x,1)} \int_0^\infty T_u^V(x,y) u^{h-1} \, du \, dy 
        + \int_{0}^{\rho^2(x)} \int_{\mathbb{R}^d} \left( T_u^V(x,y) - T_u(x-y) \right) u^{h-1} \, dy \, du \right. \\
        &  + \left. \int_{\rho^2(x)}^\infty \int_{\mathbb{R}^d \setminus B(x,1)} T_u^V(x,y) u^{h-1} \, dy \, du
        + \int_{\rho^2(x)}^\infty \int_{B(x,1)} T_u^V(x,y) u^{h-1} \, dy \, du
        + \frac{\rho(x)^{2h}}{h} \right) \mathcal{L}_V^{-(t-h)} f(x),
    \end{split}
\end{equation*}
where $0 < h < t$.

Since $\mathcal{L}_V^{-(t-h)} f\in \textup{Lip}_V^{\theta+t+h}$ we deduce that there exists $C>0$ such that
\begin{equation*}
    |\mathcal{L}_V^{-(t-h)} f(x) - \mathcal{L}_V^{-(t-h)} f(y)| \leq C |x-y|^{\theta + t - h}, \quad y\in \RR^d \text{ and } 0<h<t/2,
\end{equation*}
and
\begin{equation*}
    |\mathcal{L}_V^{-(t-h)} f(y)| \leq C \rho(y)^{\theta+t-h}, \quad y\in \RR^d \text{ and } 0<h<t/2.
\end{equation*}

Since $0<t<1-\theta$ we have that, for every $y\in \RR^d$ and $0<h<t/2$
\begin{equation*}
    \rho(y)^{\theta+t-h} \leq \rho(y)^{\theta +t/2} + \rho(y)^{\theta+t} \leq \rho(y)^\theta + \rho(y),
\end{equation*}
and 
\begin{equation*}
    |x-y|^{\theta +t -h } \leq |x-y|^{\theta +t} + |x-y|^{\theta+t/2} \leq |x-y|^\theta + |x-y|.
\end{equation*}

By taking into acccount that
\begin{equation*}
    \int_{\RR^d} \frac{\rho(y)^\theta + \rho(y)}{(1+|y|)^{d+2h_0}} dy < \infty 
\end{equation*}
for some $h_0\in(0,1)$,
we can use the Dominated Convergence Theorem to get
\begin{equation*}
    \begin{split}
        \lim_{h\rightarrow 0^+} \frac{\mathcal{L}_V^{-h} - I}{-h}\mathcal{L}_V^{-(t-h)} f(x) & = \lim_{h\rightarrow 0^+} \frac{1}{h} \left( \frac{1}{\Gamma(h)} \left( J_1(h) + J_2(h) + J_3(h) \right) - \mathcal{L}_V^{-t} f(x)\right)
        \\ & = -(\log \mathcal{L}_V) \mathcal{L}_V^{-t} f(x).
    \end{split}
\end{equation*}
Thus,~\eqref{eq: 3.3} is proved.

By combining~\eqref{eq: 3.1} and~\eqref{eq: 3.3} we conclude that $u(x,t) = \mathcal{L}_V^{-t} f(x)$, $x\in \RR^d$ and $t>0$, is in $D(\log \mathcal{L}_V)$ and 
\begin{equation*}
    \frac{\partial}{\partial t} u(x,t) = -\log(\mathcal{L}_V)) u(x,y),\quad x\in \RR^d \text{ and } t>0.
\end{equation*}

Since $\mathcal{L}_V^{-t}(f)\in \textup{Lip}_V^{\theta +t}$, $0<t<1-\theta$, the function $u(\cdot, t)$ is continuous in $\RR^d$ for every $t\in (0,1-\theta)$. 

In the first part of the proof we have proved that the function $u(x,\cdot)$ is continuous in $t\in(0,1-\theta)$, for every $x\in \RR^d$. We are going to see that the function $u$ is continuous in $\RR^d\times (0,1-\theta)$. Let $(x_0,t_0)\in \RR^d\times (0,1-\theta)$.  Suppose that, for every $k\in \NN$, $(x_k,t_k)\in \RR^d\times (0,1-\theta)$ and that $(x_k,t_k)\rightarrow (x_0,t_0)$, as $k\rightarrow \infty$. We can write
\begin{equation*}
    |u(x_k,t_k) - u(x_0,t_0)| \leq 
    | \mathcal{L}_V^{-t_k}(f)(x_k) - \mathcal{L}_V^{-t_k}(f)(x_0)| + |\mathcal{L}_V^{-t_k}(f)(x_0) - \mathcal{L}_V^{-t_0}(f)(x_0)|, \quad k\in\NN.
\end{equation*}

We chose $\delta_0>0$ such that $[t_0 - \delta_0,t_0+ \delta_0]\subseteq (0,1-\theta)$. There exists $k_0\in \NN$ such that $t_k \in [t_0-\delta_0, t_0+\delta_0]$ and $|x_k-x_0|<1$, for $k\geq k_0$. According to~\eqref{eq: 3.7}, there exists $C>0$ such that
\begin{equation}\label{eq: 3.8}
    |\mathcal{L}_V^{-t_k}(f)(x_k) - \mathcal{L}_V^{-t_{k}} (f)(x_0) | \leq C |x_k - x_0|^{\theta + t_k} \leq C |x_k - x_0|^{\theta+t_0-\delta_0}, \quad k\geq k_0.
\end{equation}

On the other hand, using~\eqref{eq: 1.2} we obtain 
\begin{equation*}
    \begin{split}
        |\mathcal{L}_V^{-t_k}(f)(x_0) - \mathcal{L}_V^{-t_0}(f)(x_0)| & \leq \int_{0}^\infty \left| \frac{u^{t_k}-1}{\Gamma(t_k)} - \frac{u^{t_0-1}}{\Gamma(t_0)}\right| \int_{\RR^d} T_u^V(x_0,y)|f(y)| dy du
        \\ & \leq \int_{0}^\infty \left| \frac{u^{t_k}-1}{\Gamma(t_k)} - \frac{u^{t_0-1}}{\Gamma(t_0)}\right| \int_{\RR^d} \frac{e^{\frac{-c|x_0-y|^2}{u}}}{u^{d/2}}|f(y)| dy du, \quad k\in \NN.
    \end{split}
\end{equation*}

Since $t_0+\delta_0-d/2<0$, we have that 
\begin{equation}\label{eq: 3.9}
\begin{split}
    \int_1^\infty \left| \frac{u^{t_k - 1}}{\Gamma(t_k)} - \frac{u^{t_0 - 1}}{\Gamma(t_0)} \right| \int_{\mathbb{R}^d} \frac{e^{-c|x_0 -y|^2/u}}{u^{d/2}} |f(y)| dy du
    &  \leq C \int_1^{\infty} \left( \frac{u^{t_k - 1}}{\Gamma(t_k)} + \frac{u^{t_0 - 1}}{\Gamma(t_0)} \right) \frac{1}{u^{d/2}} \int_{\RR^d} |f(y)| dy du
    \\ & \leq C \int_1^\infty u^{t_0+\delta_0-1-d/2} \int_{\RR^d} |f(y)| dy du <\infty, 
\end{split}
\end{equation}
for $k\geq k_0$. Since $\rho(y)\simeq \rho(x)$ when $|x_0 - y| < \rho(x)$ we get

\begin{equation}\label{eq: 3.10}
    \begin{split}
\int_0^1 & \left| \frac{u^{t_k-1}}{\Gamma(t_k)} - \frac{u^{t_0-1}}{\Gamma(t_0)}\right|  \int_{\RR^d} \frac{e^{-c|x_0-y|^2/u}}{u^{d/2}} |f(y)| dy du
 \\ & C\leq \int_0^1 u^{t_0 - \delta_0 -1} \left( \int_{|x_0 - y|<\rho(x_0)}  \frac{e^{-c|x_0-y|^2/u}}{u^{d/2}} |f(y)| dy   + \int_{|x_0 - y|\geq\rho(x_0)}  \frac{e^{-c|x_0-y|^2/u}}{u^{d/2}} |f(y)| dy 
\right) du
\\ & C\leq \int_0^1 u^{t_0 - \delta_0 -1} \left( \int_{|x_0 - y|<\rho(x_0)}  \frac{e^{-c|x_0-y|^2/u}}{u^{d/2}} \rho(y)^\theta dy   + \int_{|x_0 - y|\geq\rho(x_0)}  \frac{|f(y)|}{|x_0 - y|^d} dy 
\right) du
\\ & C\leq \int_0^1 u^{t_0 - \delta_0 -1} \left( \int_{\RR^d}  |f(y)| dy \rho(x_0)^{-d} + \rho(x_0)^\theta  
\right) du <\infty,
\end{split}
\end{equation}
for $k<k_0$. Nothe that the constants $C$ in~\eqref{eq: 3.9} and~\eqref{eq: 3.10} does not depend on $k$. 
By combining~\eqref{eq: 3.9} and~\eqref{eq: 3.10} and by using the Dominated Convergence Theorem we obtain
\begin{equation}\label{eq: 3.11}
    \lim_{k\rightarrow \infty} \mathcal{L}_V^{-t_k} (f)(x_0) = \mathcal{L}_V^{-t_0} (f)(x_0).
\end{equation}

By putting together~\eqref{eq: 3.8} and~\eqref{eq: 3.11} we conclude that

\begin{equation}
    \lim_{k\rightarrow \infty} \mathcal{L}_V^{-t_k} (f)(x_k) = \mathcal{L}_V^{-t_0} (f)(x_0).
\end{equation}
Thus, we prove that $u$ is continuous in $\RR^d \times (0,1-\theta)$. 

We now see that $\frac{\partial u}{\partial t}$ is continuous in $\RR^d \times (0,1-\theta)$. We have that

\begin{equation*}
    \frac{\partial u (x,t)}{\partial t} = \frac{1}{\Gamma(t)} \int_0^\infty u^{t-1} \log u \, T_u^V(f)(x) du {-} \frac{\Gamma'(t)}{\Gamma(t)} u(x,t), \qquad x\in\RR^d \text{ and } t\in (0,1-\delta). 
\end{equation*}

Differentiation under the integral sign is justified because we have that
\begin{equation*}
    \begin{split}
        \int_0^\infty u^{t-1} |\log u| \int_{\RR^d} T_u^V(x,y) |f(y)| dy du & \leq C \left( \int_0^1 u^{t-1} |\log u|  \int_{|x-y|<\rho(x)} \frac{e^{-c|x-y|^2/u}}{u^{d/2}} |f(y)| dy du \right.
        \\ & \qquad + \int_0^1 u^{t-1} |\log u|  \int_{|x-y|\geq\rho(x)} \frac{e^{-c|x-y|^2/u}}{u^{d/2}} |f(y)| dy du
        \\ & \qquad \left. + 
        \int_1^\infty u^{t-1} |\log u|  \int_{\RR^d} \frac{e^{-c|x-y|^2/u}}{u^{d/2}} |f(y)| dy du\right)
        \\ & \leq C  \left( \rho(x)^{-\theta}\int_0^1 u^{t-1} |\log u|  du  \right.
        \\ & \qquad +  \rho(x)^{-d} \int_0^1 u^{t-1} |\log u|   \int_{|x-y|\geq \rho(x)} {|f(y)| dy du }
        \\ & \qquad \left. + 
        \int_1^\infty u^{t-1-d/2} |\log u|  \int_{\RR^d}  |f(y)| dy du\right) <\infty,
    \end{split}
\end{equation*}
for $x\in \RR^d$ and $t_0\in(0,1-\theta)$. We choose a sequence $\{(x_k,t_k)\}_{k\in \NN}\subseteq \RR^d \times (0,1-\theta)$ such that  $\displaystyle \lim_{k\rightarrow \infty} (x_k,t_k) = (x_0,t_0)$. We can write
\begin{equation*}
\begin{split}
    \int_0^\infty u^{t_k-1} \log u T_u^V(f)(x_k)du & = \int_1^\infty u^{t_k-1} \log u \int_{\RR^d} T_u^V (x_k,y) f(y) dy du 
    \\ & \qquad +
    \int_{0}^1 u^{t_k -1} \log u \int_{|y-x_0|\geq \rho(x_0)} T_u^V (x_k,y) f(y) dy du
    \\ & \qquad + \int_{0}^1 u^{t_k -1} \log u \int_{|y-x_0|< \rho(x_0)} T_u^V (x_k,y) f(y) dy du
    \\ & = J_1(k) + J_2(k) + J_3(k), \qquad  k\in \NN.
\end{split}
\end{equation*}

We choose $\delta_0>0$ such that $[t_0 - \delta_0, t_0 + \delta_0]\subseteq (0,1-\theta)$. There exists $k_0\in \NN$ such that $t_k \in (t_0 - \delta_0, t_0 + \delta_0)$, $k\geq k_0$. 

We now apply~\eqref{eq: 1.2} to get
\begin{equation*}
    \begin{split}
        u^{t_k-1} |\log u | T_u^V(x_k,y) |f(y)| \leq C u^{t_0+\delta_0 - 1} |\log u | \frac{e^{-c|y-x_k|^2/u}}{u^{d/2}} |f(y)| \leq C |\log u | u^{{t_0 + \delta_0 -d/2- 1}} |f(y)|, 
    \end{split}
\end{equation*}
for $y\in \RR^d$, $u\in(1,\infty)$ and $k\geq k_0$, being
\[
\int_1^\infty \int_{\RR^d} u^{t_0+\delta_0 - 1 - d/2} |f(y)|dy du <\infty. 
\]
Then
\[
\lim_{k\rightarrow\infty} J_1(k) = \int_1^\infty u^{t_0-1} \log u \int_{\RR^d} T_u^V(x_0,y) f(y) dy.
\]

On the other hand, there exists $k_1\geq k_0$, $k_1\in \NN$ such that $|x_0 - x_k| < \rho(x_0)/2$. We obtain
\begin{equation*}
    \begin{split}
        u^{t_k-1}|\log u | T_u^V(x_k,y) |f(y)|  & \leq C u^{t_0 - \delta_0 - 1} |\log u| \frac{e^{-c|y-x_k|^2/u}}{u^{d/2}} |f(y)|
        \\ & \leq  C u^{t_0 - \delta_0 - 1} |\log u| \frac{|f(y)|}{|y-x^k|^d}
        \\ & \leq C u^{t_0 - \delta_0 - 1} |\log u| |f(y)| \rho(x_0)^{-d}, 
    \end{split}
\end{equation*}
for $y\in \RR^d$, $|y-x_0|\geq \rho(x_0)$,  $0<t<1$ and $k\geq k_1$, because $|y-x_k| \geq |y-x_0| - |x_0 - x_k| \geq \rho(x_0) - \rho(x_0)/2 = \rho(x_0)/2$, when $|y-x_0|\geq \rho(x_0)$ and $k\geq k_1$. 

Since 
\[\int_0^1 \int_{\RR^d} u^{t_0-\delta_0 - 1} |\log u| |f(y)|dy du <\infty,\]
it follows that
\[\lim_{k\rightarrow \infty} J_2(k) = \int_0^1 u^{t_0-1} \log u \int_{{|x-y|\geq \rho(x_0)}} T_u^V(x_0,y) f(y) dy.\]

We also have that, since $f\in \Lip_V^\theta$, we get
\begin{equation*}
    \begin{split}
        u^{t_k-1} |\log u | \int_{|y-x_0|<\rho(x_0)} T_t(x_k,y) |f(y)| dy & \leq C u^{t_0 - {\delta_0} - 1} |\log u| \int_{|y-x_0|<\rho(x_0)} \frac{e^{-c|x_k-y|^2/u|}}{u^{d/2}} |f(y)| dy 
        \\ & \leq C u^{t_0 - {\delta_0} - 1} |\log u| \int_{|y-x_0|<\rho(x_0)} \frac{e^{-c|x_k-y|^2/u|}}{u^{d/2}} \rho(y)^\theta dy
        \\ & \leq C \rho(x_0)^\theta  u^{t_0 - {\delta_0} - 1} |\log u| \int_{|y-x_0|<\rho(x_0)} \frac{e^{-c|x_k-y|^2/u|}}{u^{d/2}} dy,
    \end{split}
\end{equation*}
for $u\in (0,1)$ and $k\geq k_0$, and 
\[ T_u^V(x_k,y) |f(y)|  \leq C \frac{e^{-c|x_k-y|^2/2}}{u^{d/2}}|f(y)| \leq C\frac{|f(y)|}{u^{d/2}},\]
for $u>0$, $y\in \RR^d$ and $k\in \NN$.
Then
\[\lim_{k\rightarrow \infty } \int_{|y-x_0|<\rho(x_0)} T_u^V(x_k,y) f(y) dy = \int_{|y-x_0|<\rho(x_0)} T_u^V(x_0,y) f(y) dy, \qquad u>0,\]
and
\[
\begin{split}
\lim_{k\rightarrow \infty } J_3(k) & = \int_0^1 u^{t_0-1} \log u \lim_{k\rightarrow \infty }  \int_{|y-x_0|<\rho(x_0)} T_u^V(x_k,y) f(y) dy du
\\ & = \int_0^1 u^{t_0-1} \log u  \int_{|y-x_0|<\rho(x_0)} T_u^V(x_0,y) f(y) dy du
\end{split}
\]

Then, we prove that 
\[\lim_{k\rightarrow\infty} \int_0^\infty u^{t_k-1} \log u   T_u^V(f)(x_k) du = \int_0^\infty u^{t_0-1} \log u   T_u^V(f)(x_0) du.\]
We conclude that $\frac{\partial u}{\partial t}$, and hence $(\log \mathcal{L}_V)u$ is continuous in $\RR^d\times (0,1-\theta)$.

Our next objective is to see that
\begin{equation}\label{eq: 3.12}
\lim_{t\rightarrow 0^+} u(x,t) = f(x), \qquad x\in \RR^d.    
\end{equation}
Let $x\in \RR^d$. We choose $R>0$ such that $R>|x|+1$. We decompose $u$ as follows.
\begin{equation*}
    \begin{split}
        u(x,t) & = \frac{1}{\Gamma(t)} \int_0^\infty u^{t-1} \int_{B(x,R)} T_u^V(x,y) f(y) dy du + \frac{1}{\Gamma(t)} \int_0^\infty u^{t-1} \int_{\RR^d\setminus B(x,R)} T_u^V(x,y) f(y) dy du
        \\ & = J_1(t) + J_2(t), \qquad 0<t<1-\theta.
    \end{split}
\end{equation*}

First we prove that $J_1(t) \rightarrow f(x)$ as $t\rightarrow 0^+$. We can write
\begin{equation*}
    \begin{split}
        J_1(t) & = \frac{1}{\Gamma(t)} \int_0^{\rho^2(x)} u^{t-1} \int_{B(x,R)} \left( T_u^V(x,y) - T_u(x-y)\right) f(y) dy
        \\ & \qquad + \frac{1}{\Gamma(t)} \int_{\rho^2(x)}^\infty u^{t-1} \int_{B(x,R)}  T_u^V(x,y)  f(y) dy
        \\ & \qquad - \frac{1}{\Gamma(t)} \int_{\rho^2(x)}^\infty u^{t-1} \int_{B(x,R)}  T_u(x-y)  f(y) dy
        \\ & \qquad + \frac{1}{\Gamma(t)} \int_{0}^\infty u^{t-1} \int_{B(x,R)}  T_u(x-y)  f(y) dy
        \\ & = J_{1,1}(t) +J_{1,2}(t) + J_{1,3}(t) + J_{1,4}(t), \qquad 0<t<1-\theta.
    \end{split}
\end{equation*}
According to~\eqref{eq: 1.5} we have that
\[ |J_{1,1}(t)| \leq C \frac{1}{\Gamma(t)}
\int_0^{\rho^2(x)} u^{t-1} \int_{B(x,R)} \left(\frac{\sqrt{u}}{\rho(x)} \right)^\delta \omega_k(x-y) |f(y)| dy du, \qquad 0<t<1-\theta.\]

By using~\eqref{eq: rox_vs_roy} and a compactness argument as we did in the proof of Theorem~\ref{thm: 1.2}\ref{item: teo 1.2 - b}, we can find $C>1$ such that
\[\frac{1}{C} \leq \rho(y)\leq C, \qquad y\in B(x,R).\]

Since $f\in \Lip_V^\theta$ we deduce that
\[
|J_{1,1}(t)| \leq \frac{C}{\Gamma(t)} \int_0^{\rho^2(x)} u^{t-1+\delta/2} \int_{B(x,R)} \omega_u(x-y) dy du \leq \frac{C}{\Gamma(t)} \frac{\rho(x)^{2(t+\delta/2)}}{t+\delta/2}, \qquad 0<t<1-\theta.
\]
Then, $\displaystyle\lim_{t\rightarrow 0^+} J_{1,1}(t) = 0$. 

From \eqref{eq: 1.2} we get
\begin{align*}
    \left|J_{1,2}(t)\right|+ \left|J_{1,3}(t)\right| &\leq  C\frac{1}{\Gamma(t)}\int_{\rho^2(x)}^\infty u^{t-1}\int_{B(x,R}\frac{e^{-c\frac{|x-y|^2}{u}}}{u^{\frac{d}{2}}}|f(y)|dydu\\
    & \leq  C\frac{1}{\Gamma(t)} \int_{\rho^2(x)}^\infty u^{t- \frac{\delta}{2}-1}du \int_{\mathbb R^d} |f(y)|dy \\
    & \leq  C \frac{1}{\Gamma(t)}\frac{\rho(x)^{2t-n}}{\frac{d}{2}-t},\quad 0<t<1-\theta 
\end{align*}
It follows that $\displaystyle\lim_{t\rightarrow 0}J_{1,2}(t)+J_{1,3}(t))=0$.

By interchanging the order of integration we obtain
\begin{align*}
    J_{1,4}(t) &=\frac{1}{\Gamma(t)}\int_{B(x,R)}f(y)\int_0^\infty\frac{e^{-\frac{|x-y|^2}{4u}}}{(4\pi u)^{\frac{d}{2}}}u^{t-1}du dy\\
    &= \frac{4^{-t}\Gamma\left(\frac{d-2t}{2}\right)}{\pi^{\frac{d}{2}\Gamma(t)}}\int_{B(x,R)}\frac{f(y)}{|x-y|^{d-2t}}dy,\quad 0<t<1-\theta.
\end{align*}
As it was proved in the proof of \cite[Proposition 2.3]{ChV1} we have that
\[
\lim_{t \rightarrow 0} J_{1,4}(t) = f(x).
\]
On the other hand by using again \eqref{eq: 1.2} we get
\begin{align*}
    J_{2}(t) &=\frac{1}{\Gamma(t)}\int_0^\infty u^{t-1}\int_{\mathbb R^d\setminus B(x,R)}\frac{e^{-\frac{|x-y|^2}{4u}}}{(4\pi u)^{\frac{n}{2}}}|f(y)|dy du\\ 
    &= \frac{4^{-t}\Gamma\left(\frac{d-2t}{2}\right)}{\pi^{\frac{d}{2}\Gamma(t)}}\int_{\mathbb R^d \setminus B(x,R)}\frac{|f(y)|}{|x-y|^{n-2t}}dy,\quad 0<t<1-\theta.
\end{align*}

By using now H\"older and Gagliardo-Nirenberg inequalities as in the proof of \cite[Proposition 2.3]{ChV1} we conclude that
\[
\lim_{t \rightarrow 0} J_2(t)=0.
\]

\bibliographystyle{acm}

\end{document}